\newcommand{\reals}{\mathbb{R}}
\newcommand{\bracketb}[1]{\Big[#1\Big]}
\newcommand{\pb}[1]{\left\{#1\right\}}
\newcommand{\pba}[1]{\big\{#1\big\}}
\newcommand{\norm}[1]{\left|\left|#1\right|\right|}
\newcommand{\paraa}[1]{\big(#1\big)}
\newcommand{\parab}[1]{\Big(#1\Big)}
\newcommand{\parac}[1]{\bigg(#1\bigg)}
\newtheorem{theorem}{Theorem}[section]
\newtheorem{lemma}[theorem]{Lemma}
\newtheorem{proposition}[theorem]{Proposition}
\theoremstyle{definition}
\newtheorem{definition}[theorem]{Definition}
\theoremstyle{remark}
\newtheorem{remark}[theorem]{Remark}
\numberwithin{equation}{section}
\newcommand{\tr}{\operatorname{tr}}
\newcommand{\Tr}{\operatorname{Tr}}
\newcommand{\Z}{\mathcal{Z}}
\newcommand{\B}{\mathcal{B}}
\renewcommand{\P}{\mathcal{P}}
\renewcommand{\S}{\mathcal{S}}
\newcommand{\T}{\mathcal{T}}
\newcommand{\C}{\mathcal{C}}
\newcommand{\W}{\mathcal{W}}
\newcommand{\R}{\mathcal{R}}
\newcommand{\J}{\mathcal{J}}
\newcommand{\JM}{\mathcal{J}_M}
\newcommand{\xv}{\vec{x}}
\newcommand{\gb}{\,\bar{\!g}}
\renewcommand{\d}{\partial}
\newcommand{\TSigma}{T\Sigma}
\newcommand{\eps}{\varepsilon}
\newcommand{\nablab}{\bar{\nabla}}
\newcommand{\Gammab}{\bar{\Gamma}}
\newcommand{\Rb}{\bar{R}}
\newcommand{\TN}{T^{(N)}}
\newcommand{\Nh}{\hat{N}}
\newcommand{\av}{\vec{a}}
\newcommand{\cv}{\vec{c}}
\newcommand{\nAv}{\vec{n}_A}
\renewcommand{\mid}{\mathds{1}}
\title[Geometry of embedded manifolds in terms of Nambu brackets]{On the classical geometry of embedded\\ manifolds in terms of Nambu brackets}
\author{Joakim Arnlind}
\address[Joakim Arnlind]{Max Planck Institute for Gravitational Physics\\ 
Am M\"uhlenberg 1\\
D-14476 Golm\\
Germany}
\email{joakim.arnlind@aei.mpg.de}
\author{Jens Hoppe}
\address[Jens Hoppe]{Department of Mathematics\\
KTH\\
S-10044 Stockholm\\
Sweden}
\email{hoppe@math.kth.se}
\author{Gerhard Huisken}
\address[Gerhard Huisken]{Max Planck Institute for Gravitational Physics\\
Am M\"uhlenberg 1\\
D-14476 Golm\\
Germany}
\email{gerhard.huisken@aei.mpg.de}
\thanks{}
\subjclass[2000]{}
\keywords{}
\begin{document}

\begin{abstract}
  We prove that many aspects of the differential geometry of embedded
  Riemannian manifolds can be formulated in terms of a multi-linear
  algebraic structure on the space of smooth functions.  In
  particular, we find algebraic expressions for Weingarten's formula,
  the Ricci curvature and the Codazzi-Mainardi equations.
\end{abstract}

\maketitle

\section{Introduction}

\noindent Given a manifold $\Sigma$, it is interesting to study in
what ways information about the geometry of $\Sigma$ can be
extracted as algebraic properties of the algebra of smooth functions
$C^\infty(\Sigma)$. In case $\Sigma$ is a Poisson manifold, this
algebra has a second (apart from the commutative multiplication of
functions) bilinear (non-associative) algebra structure realized as the
Poisson bracket. The bracket is compatible with the
commutative multiplication via Leibniz rule, thus carrying the basic
properties of a derivation. 

On a surface $\Sigma$, with local coordinates $u^1$ and $u^2$, one may
define
\begin{align*}
  \{f,h\} = \frac{1}{\sqrt{g}}\parac{\frac{\d f}{\d u^1}\frac{\d h}{\d u^2}-
  \frac{\d h}{\d u^1}\frac{\d f}{\d u^2}},
\end{align*}
where $g$ is the determinant of the induced metric tensor, and one can readily
check that $\paraa{C^\infty(\Sigma),\{\cdot,\cdot\}}$ is a Poisson
algebra. Having only this very particular combination of derivatives
at hand, it seems at first unlikely that one can encode geometric
information of $\Sigma$ in Poisson algebraic
expressions. Surprisingly, it turns out that many differential
geometric quantities can be computed in a completely algebraic way,
cp.  Theorem \ref{thm:ricciCurvature} and Theorem
\ref{thm:CMNambu}. For instance, the Gaussian curvature of a surface
embedded in $\reals^3$ can be written as
\begin{align*}
  K = -\frac{1}{2}\sum_{i,j=1}^3\{x^i,n^j\}\{x^j,n^i\},
\end{align*}
where $x^i(u^1,u^2)$ are the embedding coordinates and $n^i(u^1,u^2)$
are the components of a unit normal vector at each point of $\Sigma$.

For a general $n$-dimensional manifold $\Sigma$, we are led to consider
Nambu brackets \cite{n:generalizedmech}, i.e. multi-linear $n$-ary maps from
$C^\infty(\Sigma)\times\cdots\times C^\infty(\Sigma)$ to $C^\infty(\Sigma)$, defined by
\begin{align*}
  \{f_1,\ldots,f_n\} = \frac{1}{\sqrt{g}}\eps^{a_1\cdots a_n}\paraa{\d_{a_1}f_1}\cdots\paraa{\d_{a_n} f_n}.
\end{align*}

Our initial motivation for studying this
problem came from matrix regularizations of Membrane Theory. Classical
solutions in Membrane Theory are 3-manifolds with vanishing mean
curvature in $\reals^{1,d}$. Considering one of the coordinates to be
time, the problem can also be formulated in a dynamical way as
surfaces sweeping out volumes of vanishing mean curvature. In this
context, a regularization was introduced replacing the infinite
dimensional function algebra on the surface by an algebra of $N\times
N$ matrices \cite{h:phdthesis}. If we let $\TN$ be a linear map from
smooth functions to hermitian $N\times N$ matrices, the regularization
is required to fulfill
\begin{align*}
  &\lim_{N\to\infty}\norm{\TN(f)\TN(g)-\TN(fg)}=0,\\
  &\lim_{N\to\infty}\norm{N[\TN(f),\TN(h)]-i\TN(\{f,h\})}=0,
\end{align*}
where $||\cdot||$ denotes the operator norm, and therefore it is
natural to regularize the system by replacing (commutative)
multiplication of functions by (non-commutative) multiplication of
matrices and Poisson brackets of functions by commutators of matrices.

Although we may very well consider $\TN(\frac{\d f}{\d u^1})$, its
relation to $\TN(f)$ is in general not simple. However, the particular
combination of derivatives in $\TN(\{f,h\})$ is expressed in terms of
a commutator of $\TN(f)$ and $\TN(h)$. In the context of Membrane
Theory, it is desirable to have geometrical quantities in a form that
can easily be regularized, which is the case for any expression
constructed out of multiplications and Poisson brackets.

The paper is organized as follows: In Section \ref{sec:preliminaries}
we introduce the relevant notation by recalling some basic facts about
submanifolds. In Section \ref{sec:nambuPoissonFormulation} we
formulate several basic differential geometric objects in terms of
Nambu brackets, and in Section \ref{sec:normalVectors} we
provide a construction of a set of orthonormal basis vectors of the
normal space. Section \ref{sec:CodazziMainardi} is devoted to the
study of the Codazzi-Mainardi equations and how one can rewrite them
in terms of Nambu brackets. Finally, in Section
\ref{sec:surfaces} we study the particular case of surfaces, for which
many of the introduced formulas and concepts are particularly nice and
in which case one can construct the complex structure in terms of
Poisson brackets.


\section{Preliminaries}\label{sec:preliminaries}

\noindent To introduce the relevant notations, we shall recall some
basic facts about submanifolds, in particular Gauss' and Weingarten's
equations (see
e.g. \cite{kn:foundationsDiffGeometryI,kn:foundationsDiffGeometryII}
for details). For $n\geq 2$, let $\Sigma$ be a $n$-dimensional manifold embedded in a
Riemannian manifold $M$ with $\dim M=n+p\equiv m$. Local coordinates on $M$ will be denoted by
$x^1,\ldots,x^m$, local coordinates on $\Sigma$ by $u^1,\ldots,u^n$,
and we regard $x^1,\ldots,x^m$ as being functions of $u^1,\ldots,u^n$
providing the embedding of $\Sigma$ in $M$. The metric tensor on $M$
is denoted by $\gb_{ij}$ and the induced metric on $\Sigma$ by
$g_{ab}$; indices $i,j,k,l,n$ run from $1$ to $m$, indices
$a,b,c,d,p,q$ run from $1$ to $n$ and indices $A,B,C,D$ run from $1$
to $p$. Furthermore, the covariant derivative and the Christoffel
symbols in $M$ will be denoted by $\nablab$ and $\Gammab^{i}_{jk}$
respectively.

The tangent space $\TSigma$ is regarded as a subspace of the tangent
space $TM$ and at each point of $\Sigma$ one can choose
$e_a=(\d_ax^i)\d_i$ as basis vectors in $\TSigma$, and in this basis
we define $g_{ab}=\gb(e_a,e_b)$. Moreover, we choose a set of normal
vectors $N_A$, for $A=1,\ldots,p$, such that
$\gb(N_A,N_B)=\delta_{AB}$ and $\gb(N_A,e_a)=0$.

The formulas of Gauss and Weingarten split the covariant derivative in
$M$ into tangential and normal components as
\begin{align}
  &\nablab_X Y = \nabla_X Y + \alpha(X,Y)\label{eq:GaussFormula}\\
  &\nablab_XN_A = -W_A(X) + D_XN_A\label{eq:WeingartenFormula}
\end{align}
where $X,Y\in \TSigma$ and $\nabla_X Y$, $W_A(X)\in\TSigma$ and
$\alpha(X,Y)$, $D_XN_A\in\TSigma^\perp$. By expanding $\alpha(X,Y)$ in
the basis $\{N_1,\ldots,N_p\}$ one can write (\ref{eq:GaussFormula}) as
\begin{align}
  &\nablab_X Y = \nabla_X Y + \sum_{A=1}^ph_A(X,Y)N_A,\label{eq:GaussFormulah}
\end{align}
and we set $h_{A,ab} = h_A(e_a,e_b)$. From the above equations one derives the relation
\begin{align}
  h_{A,ab} &= -\gb\paraa{e_a,\nablab_b N_A},
\end{align}
as well as Weingarten's equation
\begin{align}
  h_A(X,Y) = \gb\paraa{W_A(X),Y},  
\end{align}
which implies that $(W_A)^a_b = g^{ac}h_{A,cb}$, where $g^{ab}$
denotes the inverse of $g_{ab}$.  

From formulas (\ref{eq:GaussFormula}) and (\ref{eq:WeingartenFormula})
one obtains Gauss' equation, i.e. an expression for the curvature $R$
of $\Sigma$ in terms of the curvature $\Rb$ of $M$, as
\begin{equation}\label{eq:GaussEquation}
  \begin{split}
    g\paraa{R(X,Y)Z,V} =
    \gb&\paraa{\Rb(X,Y)Z,V}-\gb\paraa{\alpha(X,Z),\alpha(Y,V)}\\
    &+\gb\paraa{\alpha(Y,Z),\alpha(X,V)},    
  \end{split}
\end{equation}
where $X,Y,Z,V\in\TSigma$. As we shall later on consider the Ricci curvature,
let us note that (\ref{eq:GaussEquation}) implies
\begin{align}
  \R^p_b = g^{pd}g^{ac}\gb\paraa{\Rb(e_c,e_d)e_b,e_a}
  +\sum_{A=1}^p\bracketb{(W_A)^a_a(W_A)_b^p-(W_A^2)_b^p}
\end{align}
where $\R$ is the Ricci curvature of $\Sigma$ considered as a map
$\TSigma\to\TSigma$.  We also recall the mean curvature vector,
defined as
\begin{align}
  H = \frac{1}{n}\sum_{A=1}^p\paraa{\tr W_A}N_A.
\end{align}


\section{Algebraic formulation}\label{sec:nambuPoissonFormulation}

\noindent In this section we will prove that one can express many
aspects of the differential geometry of an embedded manifold $\Sigma$
in terms of a Nambu bracket introduced on $C^\infty(\Sigma)$.
Let $\rho:\Sigma\to\reals$ be an arbitrary non-vanishing
density and define
\begin{align}\label{eq:PbracketDef}
  \{f_1,\ldots,f_n\} = \frac{1}{\rho}\eps^{a_1\cdots a_n}\paraa{\d_{a_1}f_1}\cdots\paraa{\d_{a_n} f_n}
\end{align}
for all $f_1,\ldots,f_n\in C^\infty(\Sigma)$, where $\eps^{a_1\cdots
  a_n}$ is the totally antisymmetric Levi-Civita symbol with
$\eps^{12\cdots n}=1$. Together with this multi-linear map, $\Sigma$ is
a Nambu-Poisson manifold.


The above Nambu bracket arises from the choice of a
volume form on $\Sigma$. Namely, let $\omega$ be a volume form and
define $\{f_1,\ldots,f_n\}$ via the formula
\begin{align}\label{eq:nambuVolumeForm}
 \{f_1,\ldots,f_n\}\omega = df_1\wedge\cdots\wedge df_n.
\end{align}
Writing $\omega=\rho\, du^1\wedge\cdots\wedge du^n$ in local
coordinates, and evaluating both sides of (\ref{eq:nambuVolumeForm})
on the tangent vectors $\d_{u^1},\ldots,\d_{u^n}$ gives
\begin{align*}
  \{f_1,\ldots,f_n\} = \frac{1}{\rho}\det\parac{\frac{\d(f_1,\ldots,f_n)}{\d(u^1,\ldots,u^n)}}
  =\frac{1}{\rho}\eps^{a_1\cdots a_n}\paraa{\d_{a_1}f_1}\cdots\paraa{\d_{a_n}f_n}.
\end{align*}
To define the objects which we will consider, it is convenient to
introduce some notation. Let
$x^1(u^1,\ldots,u^n),\ldots,x^m(u^1,\ldots,u^n)$ be the embedding
coordinates of $\Sigma$ into $M$, and let $n_A^i(u^1,\ldots,u^n)$ denote the
components of the orthonormal vectors $N_A$, normal to $\TSigma$. Using
multi-indices $I=i_1\cdots i_{n-1}$ and $\av=a_1\cdots a_{n-1}$ we define
\begin{align*}
  &\{f,\xv^I\} \equiv \{f,x^{i_1},x^{i_2},\ldots,x^{i_{n-1}}\}\\
  &\{f,\nAv^I\} \equiv \{f,n_A^{i_1},n_A^{i_2},\ldots,n_A^{i_{n-1}}\},
\end{align*}
together with
\begin{align*}
  &\d_{\av}\xv^I \equiv \paraa{\d_{a_1}x^{i_1}}\paraa{\d_{a_2}x^{i_2}}\cdots\paraa{\d_{a_{n-1}}x^{i_{n-1}}}\\
  &\paraa{\nablab_{\av}\nAv}^I \equiv \paraa{\nablab_{a_1}N_A}^{i_1}\paraa{\nablab_{a_2}N_A}^{i_2}\cdots\paraa{\nablab_{a_{n-1}}N_A}^{i_{n-1}}\\
  &\gb_{IJ} \equiv \gb_{i_1j_1}\gb_{i_2j_2}\cdots\gb_{i_{n-1}j_{n-1}}.
\end{align*}
We now introduce the main objects of our study
\begin{align}
  \P^{iJ} &= \frac{1}{\sqrt{(n-1)!}}\{x^i,\xv^J\} = \frac{1}{\sqrt{(n-1)!}}\frac{\eps^{a\av}}{\rho}\paraa{\d_ax^i}\paraa{\d_{\av}\xv^J}\\
  \S_A^{iJ}&=\frac{(-1)^n}{\sqrt{(n-1)!}}\frac{\eps^{a\av}}{\rho}\paraa{\d_ax^i}\paraa{\nablab_{\av}\nAv}^J\\
  \T_A^{Ij} &=\frac{(-1)^n}{\sqrt{(n-1)!}}\frac{\eps^{\av a}}{\rho}\paraa{\d_{\av}\xv^I}\paraa{\nablab_aN_A}^j
\end{align}
from which we construct
\begin{align}
  \paraa{\P^2}^{ik} &= \P^{iI}\P^{kJ}\gb_{IJ}\\
  \paraa{\B_A}^{ik} &= \P^{iI}(\T_A)^{Jk}\gb_{IJ}\\
  \paraa{\S_A\T_A}^{ik} &=(\S_A)^{iI}(\T_A)^{Jk}\gb_{IJ}.
\end{align}
By lowering the second index with the metric $\gb$, we will also
consider $\P^2$, $\B_A$ and $\T_A\S_A$ as maps $TM\to TM$. Note that
both $\S_A$ and $\T_A$ can be written in terms of Nambu brackets, e.g.
\begin{align*}
  \T_A^{Ij} = \frac{(-1)^n}{\sqrt{(n-1)!}}\bracketb{\{\xv^I,n_A^j\}+\{\xv^I,x^k\}\Gammab^j_{kl}n_A^l}.
\end{align*}
Let us now investigate some properties of the maps defined above. As it will appear frequently, we define 
\begin{align}
  \gamma = \frac{\sqrt{g}}{\rho}.
\end{align}
It is useful to note that (cp. Proposition \ref{prop:TrPBST})
\begin{align*}
  \gamma^2 = \sum_{i,I=1}^m\frac{1}{n!}\{x^i,\xv^I\}\{\xv^I,x^i\},
\end{align*}
and to recall the cofactor expansion of the inverse of matrix:
\begin{lemma}
  Let $g^{ab}$ denote the inverse of $g_{ab}$ and $g=\det(g_{ab})$. Then
  \begin{align}
    gg^{ba} = \frac{1}{(n-1)!}\eps^{aa_1\cdots a_{n-1}}\eps^{bb_1\cdots b_{n-1}}g_{a_1b_1}g_{a_2b_2}\cdots g_{a_{n-1}b_{n-1}}.
  \end{align}
\end{lemma}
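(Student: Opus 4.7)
The plan is to verify the identity by contracting both sides with $g_{ca}$ and checking that both reduce to $g\,\delta^b_c$. On the left-hand side this is immediate from the defining property of the inverse metric. For the right-hand side, the key tool is the classical determinant identity
\begin{align*}
  \eps^{aa_1\cdots a_{n-1}}\,g_{ca}\,g_{a_1b_1}\cdots g_{a_{n-1}b_{n-1}} = g\,\eps_{cb_1\cdots b_{n-1}},
\end{align*}
which simply expresses the fact that the alternating multi-linear form $(v_1,\ldots,v_n)\mapsto \eps^{a_1\cdots a_n}(g\,v_1)_{a_1}\cdots(g\,v_n)_{a_n}$ equals $\det(g)$ times the standard volume form. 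Substituting this into the contracted right-hand side and invoking the standard $(n-1)!$-fold contraction
\begin{align*}
  \eps^{bb_1\cdots b_{n-1}}\,\eps_{cb_1\cdots b_{n-1}} = (n-1)!\,\delta^b_c
\end{align*}
yields $g\,\delta^b_c$, matching the left-hand side and proving the lemma.

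An equally clean alternative I would mention is to start from the global determinant expansion
\begin{align*}
  g = \frac{1}{n!}\,\eps^{a_1\cdots a_n}\eps^{b_1\cdots b_n}\,g_{a_1b_1}\cdots g_{a_nb_n},
\end{align*}
differentiate with respect to $g_{ab}$, and use the classical identity $\d g/\d g_{ab} = g\,g^{ba}$. The $n$ equivalent positions in which the derivative can act cancel one factor of $n$ against $n!$, leaving exactly the claimed expression on the right.

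There is no real obstacle here; the only thing to keep track of is that $\eps^{a_1\cdots a_n}$ is the antisymmetric symbol (not a tensor), so that upper and lower index placements are interchangeable and the combinatorial factors $1/(n-1)!$ in the statement arise naturally from the $(n-1)!$ permutations of the summed indices.
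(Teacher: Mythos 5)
Your proof is correct. The paper itself states this lemma without proof, treating it as the classical cofactor expansion of the inverse matrix, so there is no argument in the text to compare against. Your main route --- contracting the right-hand side with $g_{ca}$, applying the determinant identity $\eps^{aa_1\cdots a_{n-1}}g_{ca}g_{a_1b_1}\cdots g_{a_{n-1}b_{n-1}}=g\,\eps_{cb_1\cdots b_{n-1}}$ and then the $(n-1)!$-fold epsilon contraction --- is complete, and it does establish the uncontracted identity because $g_{ca}$ is invertible, so the contraction can be undone with $g^{cd}$. The only caveat worth flagging concerns your alternative argument: differentiating $\det g$ with respect to the entries of a \emph{symmetric} matrix requires treating $g_{ab}$ and $g_{ba}$ as independent variables before restricting to the symmetric locus, since otherwise Jacobi's formula $\d g/\d g_{ab}=g\,g^{ba}$ acquires factor-of-two corrections on the off-diagonal entries; your primary argument avoids this issue entirely and is the one to keep.
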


\begin{proposition}\label{prop:PBSTproperties}
  For $X\in TM$ it holds that
  \begin{align}
    &\P^2(X) = \gamma^2\gb(X,e_a)g^{ab}e_b\label{eq:P2X}\\
    &\B_A(X) = -\gamma^2\gb(X,\nablab_a N_A)g^{ab}e_b\label{eq:BAX}\\
    &\S_A\T_A(X) = \gamma^2(\det W_A)\gb(X,\nablab_aN_A)h_A^{ab}e_b,
  \end{align}
  and for $Y\in\TSigma$ one obtains
  \begin{align}
    &\P^2(Y) = \gamma^2Y\label{eq:P2Y}\\
    &\B_A(Y) = \gamma^2W_A(Y)\\
    &\S_A\T_A(Y) = -\gamma^2(\det W_A)Y.
  \end{align}
\end{proposition}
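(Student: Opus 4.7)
The plan is to compute the components $(\P^2)^{ik}$, $(\B_A)^{ik}$ and $(\S_A\T_A)^{ik}$ directly from the definitions and then lower the second index to read off each map as an endomorphism of $TM$. Two simplifications drive the argument. The first is that the contraction of multi-indices via $\gb_{IJ}$ turns $(\d_{\av}\xv^I)(\d_{\vec{b}}\xv^J)\gb_{IJ}$ into $g_{a_1b_1}\cdots g_{a_{n-1}b_{n-1}}$ and, using the preliminary identity $\gb(\nablab_a N_A, e_b) = -h_{A,ab}$, turns $(\nablab_{\av}\nAv)^I(\d_{\vec{b}}\xv^J)\gb_{IJ}$ into $(-1)^{n-1}h_{A,a_1b_1}\cdots h_{A,a_{n-1}b_{n-1}}$. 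The second is the cofactor lemma recalled above, which collapses the resulting contraction of two Levi-Civita symbols with a symmetric product into $g\,g^{ab}$ or $\det(h_A)(h_A^{-1})^{ab}$ respectively, where for the latter one uses $\det h_A = g\det W_A$ (a consequence of $(W_A)^a_b = g^{ac}h_{A,cb}$).

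Tracking the $(-1)^n$ in the definitions of $\S_A,\T_A$ together with the sign $(-1)^{n-1}$ arising from $\eps^{\vec{b} b}=(-1)^{n-1}\eps^{b\vec{b}}$, a direct calculation yields
\begin{align*}
(\P^2)^{ik} &= \gamma^2 g^{ab}(\d_a x^i)(\d_b x^k),\\
(\B_A)^{ik} &= -\gamma^2 g^{ab}(\d_a x^i)(\nablab_b N_A)^k,\\
(\S_A\T_A)^{ik} &= \gamma^2(\det W_A)\, h_A^{ab}(\d_a x^i)(\nablab_b N_A)^k,
\end{align*}
where $h_A^{ab}$ denotes the matrix inverse of $h_{A,ab}$. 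Lowering the second index by $\gb$ turns $\gb_{kl}(\d_b x^k)X^l$ into $\gb(X,e_b)$ and $\gb_{kl}(\nablab_b N_A)^k X^l$ into $\gb(X,\nablab_b N_A)$, which establishes (\ref{eq:P2X}), (\ref{eq:BAX}) and the third identity for arbitrary $X\in TM$.

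For $Y = Y^c e_c \in\TSigma$, the tangent-space identities follow by standard contractions. Equation (\ref{eq:P2Y}) is immediate from $g_{cb}g^{ba} = \delta_c^a$. For $\B_A(Y)$, Weingarten's equation together with the fact that the normal part of $\nablab_a N_A$ is $\gb$-orthogonal to $Y$ gives $\gb(Y,\nablab_a N_A) = -h_{A,ac}Y^c$, whence the contraction $g^{ab}h_{A,ac}Y^c e_b$ reproduces $(W_A)^b_c Y^c e_b = W_A(Y)$. For $\S_A\T_A(Y)$, the same identity combined with $h_A^{ab}h_{A,ac} = \delta_c^b$ leaves exactly the factor $-\gamma^2\det W_A$ multiplying $Y$.

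The main technical obstacle is sign bookkeeping: the three sources $(-1)^n$ (definitions of $\S_A,\T_A$), $(-1)^{n-1}$ (Levi-Civita reorderings), and the sign in $\gb(\nablab_a N_A, e_b) = -h_{A,ab}$ must combine correctly across all six identities, and in the $\S_A\T_A$ case must be consistent with the extra sign generated by the cofactor expansion applied to $h_A$.
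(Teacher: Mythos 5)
Your proposal is correct and follows essentially the same route as the paper: expand the components from the definitions, contract $\gb_{IJ}$ into products of $g_{ab}$ (or $-h_{A,ab}$), and apply the cofactor lemma, then restrict to $\TSigma$ via Weingarten's formula. The paper only carries this out explicitly for $\P^2$ and declares the rest analogous; your sign bookkeeping for the $\B_A$ and $\S_A\T_A$ cases (including $\det h_A = g\det W_A$) correctly fills in those analogous computations.
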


\begin{proof}
  Let us provide a proof for equations (\ref{eq:P2X}) and (\ref{eq:P2Y}); the other
  formulas can be proven analogously.
  \begin{align*}
    \P^2(X) &= \P^{iI}\P^{jJ}\gb_{IJ}\gb_{jk}X^k\d_i = 
    \frac{\eps^{a\av}\eps^{c\cv}}{\rho^2(n-1)!}\paraa{\d_ax^i}\paraa{\d_{\av}x^I}\paraa{\d_cx^j}\paraa{\d_{\cv}x^J}\gb_{IJ}\gb_{jk}X^k\d_i\\
    &= \frac{\eps^{a\av}\eps^{c\cv}}{\rho^2(n-1)!}g_{a_1c_1}\cdots g_{a_{n-1}c_{n-1}}\paraa{\d_ax^i}\paraa{\d_cx^j}\gb_{jk}X^k\d_i\\
    &= \gamma^2g^{ac}\paraa{\d_ax^i}\paraa{\d_cx^j}\gb_{jk}X^k\d_i
    = \gamma^2\gb(X,e_c)g^{ca}e_a.
  \end{align*}
  Choosing a tangent vector $Y=Y^ce_c$ gives immediately that $\P^2(Y)=\gamma^2Y$.
\end{proof}

\noindent For a map $\B:TM\to TM$ we denote the trace by $\Tr\B\equiv
\B^i_i$ and for a map $W:\TSigma\to\TSigma$ we denote the trace by $\tr W\equiv W^a_a$.

\begin{proposition}\label{prop:TrPBST}
  It holds that
  \begin{align}
    \frac{1}{n}\Tr\P^2 &= \gamma^2\\
    \Tr\B_A &= \gamma^2\tr W_A\\
    \frac{1}{n}\Tr\S_A\T_A &= -\gamma^2(\det W_A).
  \end{align}
\end{proposition}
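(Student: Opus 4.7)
The plan is to reduce the trace computation directly to Proposition \ref{prop:PBSTproperties}, exploiting the observation that all three operators $\P^2$, $\B_A$ and $\S_A\T_A$ send $TM$ into $\TSigma$: indeed, inspecting the right-hand sides of equations (\ref{eq:P2X}) and (\ref{eq:BAX}) (and the analogous one for $\S_A\T_A$) one sees that the output always takes the form $(\text{scalar})\cdot e_b$, hence lies in the tangent bundle of $\Sigma$. This alone will make the normal contribution to each trace vanish.

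Concretely, at a given point I would choose an adapted orthonormal basis of $T_xM$, say $\{E_1,\ldots,E_n,N_1,\ldots,N_p\}$ with $E_a\in\TSigma$ and $N_A\in\TSigma^{\perp}$, and compute
\begin{align*}
  \Tr\mathcal{O} = \sum_{a=1}^n\gb\paraa{\mathcal{O}(E_a),E_a} + \sum_{B=1}^p\gb\paraa{\mathcal{O}(N_B),N_B}
\end{align*}
for $\mathcal{O}\in\{\P^2,\B_A,\S_A\T_A\}$. Since $\mathcal{O}(N_B)\in\TSigma$ in each case, the second sum is zero. For the first sum I would invoke the tangent-vector formulas (\ref{eq:P2Y}), $\B_A(Y)=\gamma^2 W_A(Y)$, and $\S_A\T_A(Y)=-\gamma^2(\det W_A)Y$ from Proposition \ref{prop:PBSTproperties}, giving
\begin{align*}
  \Tr\P^2 = \gamma^2\sum_{a=1}^n\gb(E_a,E_a)=n\gamma^2,\qquad
  \Tr\B_A = \gamma^2\sum_{a=1}^n\gb\paraa{W_A(E_a),E_a}=\gamma^2\tr W_A,
\end{align*}
and $\Tr\S_A\T_A=-n\gamma^2(\det W_A)$, which are the claimed identities after dividing by $n$ where appropriate.

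There is no genuine obstacle here; the only thing worth being careful about is the vanishing of the normal contributions. For $\P^2$ this is direct from (\ref{eq:P2X}) because $\gb(N_B,e_a)=0$; for $\B_A$ and $\S_A\T_A$ it follows not from orthogonality of $N_B$ to something, but from the structural fact that their images land in $\TSigma$, so any diagonal entry against a normal vector is automatically zero. Everything else is just reading off Proposition \ref{prop:PBSTproperties}.
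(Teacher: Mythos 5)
Your proof is correct and is precisely the argument the paper leaves implicit: Proposition \ref{prop:TrPBST} is stated without proof as an immediate consequence of Proposition \ref{prop:PBSTproperties}, and your computation of the trace in an adapted orthonormal basis, with the normal contributions vanishing because the images of $\P^2$, $\B_A$ and $\S_A\T_A$ lie in $\TSigma$, is the natural way to fill in that step.
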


\noindent A direct consequence of Propositions
\ref{prop:PBSTproperties} and \ref{prop:TrPBST} is that one can write the projection onto
$\TSigma$, as well as the mean curvature vector,  in terms of Nambu brackets.
\begin{proposition}
  The map 
  \begin{align}
    \gamma^{-2}\P^2=\frac{n}{\Tr\P^2}\P^2:TM\to\TSigma
  \end{align}
  is the orthogonal projection of $TM$ onto $\TSigma$. Furthermore,
  the mean curvature vector can be written as
\begin{align*}
  H = \frac{1}{\Tr\P^2}\sum_{A=1}^p\paraa{\Tr\B_A}N_A.
\end{align*}
\end{proposition}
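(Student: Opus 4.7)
The proof proposal is to derive both statements as essentially immediate corollaries of Propositions \ref{prop:PBSTproperties} and \ref{prop:TrPBST}; the substantive computational work already sits in those two propositions.

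For the first claim, the plan is to read off from equation (\ref{eq:P2X}) that, for any $X\in TM$,
\begin{align*}
  \gamma^{-2}\P^2(X) = \gb(X,e_a)g^{ab}e_b,
\end{align*}
which is precisely the classical coordinate expression for the orthogonal projection of $TM$ onto $\TSigma$ in the basis $\{e_a\}$. To confirm the characterising properties, I would check the two cases: for $Y\in\TSigma$, equation (\ref{eq:P2Y}) immediately gives $\gamma^{-2}\P^2(Y)=Y$; for any $N\in\TSigma^\perp$, the orthogonality relations $\gb(N,e_a)=0$ combined with (\ref{eq:P2X}) give $\gamma^{-2}\P^2(N)=0$. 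Finally, to rewrite the prefactor $\gamma^{-2}$ in purely algebraic (trace) form, I would invoke the first identity of Proposition \ref{prop:TrPBST}, namely $\gamma^2 = \tfrac{1}{n}\Tr\P^2$, obtaining $\gamma^{-2} = n/\Tr\P^2$.

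For the second claim, the plan is to start from the definition
\begin{align*}
  H = \frac{1}{n}\sum_{A=1}^p \paraa{\tr W_A}N_A
\end{align*}
recalled in Section \ref{sec:preliminaries}, and to replace $\tr W_A$ using the second identity of Proposition \ref{prop:TrPBST}, which gives $\tr W_A = \gamma^{-2}\Tr\B_A$. Substituting $\gamma^{-2}=n/\Tr\P^2$ from the previous step cancels the factor of $n$ and produces
\begin{align*}
  H = \frac{1}{\Tr\P^2}\sum_{A=1}^p \paraa{\Tr\B_A}N_A,
\end{align*}
as required.

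There is no real obstacle: every analytic step has been discharged already in Propositions \ref{prop:PBSTproperties} and \ref{prop:TrPBST}. The only thing to be careful about is that both identifications used — the replacement of $\gamma^{-2}$ by $n/\Tr\P^2$ and of $\tr W_A$ by $\Tr\B_A/\gamma^2$ — are expressed in terms of traces of the Nambu-bracket maps, so the final formulas are genuinely algebraic (in particular, no volume-form normalisation $\rho$ is needed). Hence the proof can be kept to a few lines of assembly.
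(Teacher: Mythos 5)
Your proof is correct and follows exactly the route the paper intends: the paper gives no separate proof, stating only that the proposition is ``a direct consequence of Propositions \ref{prop:PBSTproperties} and \ref{prop:TrPBST},'' and your assembly via $\gamma^{-2}\P^2(X)=\gb(X,e_a)g^{ab}e_b$, the identity $\gamma^2=\tfrac{1}{n}\Tr\P^2$, and $\Tr\B_A=\gamma^2\tr W_A$ is precisely that argument. No gaps.
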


\noindent Proposition \ref{prop:PBSTproperties} tells us that $\gamma^{-2}\B_A$
equals the Weingarten map $W_A$, when restricted to $\TSigma$. What is
the geometrical meaning of $\B_A$ acting on a normal vector? It turns
out that the maps $\B_A$ also provide information about the covariant
derivative in the normal space. If one defines $(D_X)_{AB}$ through
\begin{align*}
  D_XN_A = \sum_{B=1}^p(D_X)_{AB}N_B
\end{align*}
for $X\in\TSigma$, then one can prove the following relation to
the maps $\B_A$.
\begin{proposition}
  For $X\in\TSigma$ it holds that
  \begin{align}
    \gb\paraa{\B_B(N_A),X}=\gamma^2\paraa{D_X}_{AB}.
  \end{align}
\end{proposition}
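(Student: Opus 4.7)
The plan is to reduce the claim to Weingarten's formula combined with the fact that $D$ is a metric connection on the normal bundle. By Proposition \ref{prop:PBSTproperties} applied to $\B_B$ acting on the normal vector $N_A$, we have
\begin{align*}
  \B_B(N_A) = -\gamma^2\gb(N_A,\nablab_a N_B)g^{ab}e_b.
\end{align*}
Pairing with $X = X^c e_c \in \TSigma$ and using $\gb(e_b,e_c) = g_{bc}$, the metric factors $g^{ab}g_{bc} = \delta^a_c$ collapse, yielding
\begin{align*}
  \gb\paraa{\B_B(N_A),X} = -\gamma^2\gb\paraa{N_A, \nablab_X N_B}.
\end{align*}

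Next I would apply Weingarten's formula (\ref{eq:WeingartenFormula}) to split $\nablab_X N_B = -W_B(X) + D_X N_B$. Since $W_B(X) \in \TSigma$ and $N_A \in \TSigma^\perp$, the Weingarten piece drops out, leaving $\gb(N_A, \nablab_X N_B) = \gb(N_A, D_X N_B)$. Expanding $D_X N_B = \sum_C (D_X)_{BC}N_C$ and using orthonormality of the normal frame gives $\gb(N_A, D_X N_B) = (D_X)_{BA}$, so
\begin{align*}
  \gb\paraa{\B_B(N_A),X} = -\gamma^2(D_X)_{BA}.
\end{align*}

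The final step is to observe that the matrix $(D_X)_{AB}$ is antisymmetric in $A,B$. This follows by differentiating the orthonormality relation $\gb(N_A,N_B) = \delta_{AB}$ along $X \in \TSigma$ and noting that $\nablab_X N_A$ and $D_X N_A$ have the same component along the normal directions: $0 = X\paraa{\gb(N_A,N_B)} = \gb(D_XN_A,N_B) + \gb(N_A,D_XN_B) = (D_X)_{AB} + (D_X)_{BA}$. Substituting $(D_X)_{BA} = -(D_X)_{AB}$ into the previous display yields the desired identity.

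No step is really a serious obstacle — the only subtlety is keeping track of the index conventions in $(D_X)_{AB}$ versus $(D_X)_{BA}$, which is resolved by the antisymmetry observation. The proof is a direct substitution followed by one application of Weingarten and one of metric compatibility of the normal connection.
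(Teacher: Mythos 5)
Your proof is correct and follows essentially the same route as the paper: compute $\gb(\B_B(N_A),X)$ from Proposition \ref{prop:PBSTproperties}, identify it with $-\gamma^2\gb(N_A,\nablab_XN_B)=-\gamma^2(D_X)_{BA}$ via Weingarten's formula, and flip the indices using the antisymmetry of $(D_X)_{AB}$ coming from metric compatibility of the normal connection. The paper's proof is the same argument, merely stated more tersely.
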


\begin{proof}
  For a vector $X=X^ae_a$, it follows from Weingarten's formula (\ref{eq:WeingartenFormula}) that
  \begin{align*}
    (D_X)_{AB} = \gb\paraa{\nablab_X N_A,N_B}. 
  \end{align*}
  On the other hand, with the formula from Proposition
  \ref{prop:PBSTproperties}, one computes
  \begin{align*}
     \gb\paraa{\B_B(N_A),X} &= -\gamma^2\gb\paraa{N_A,\nablab_aN_B}g^{ab}g_{bc}X^c
     = -\gamma^2\gb\paraa{N_A,\nablab_XN_B}\\
     &= -\gamma^2(D_X)_{BA}=\gamma^2(D_X)_{AB}.
  \end{align*}
  The last equality is due to the fact that $D$ is a covariant
  derivative, which implies that
  $0=D_X\gb(N_A,N_B)=\gb(D_XN_A,N_B)+\gb(N_A,D_XN_B)$.
\end{proof}

\noindent Thus, one can write Weingarten's formula as
\begin{align}
  \gamma^2\nablab_XN_A = -\B_A(X)+\sum_{B=1}^p\gb\paraa{\B_B(N_A),X}N_B,
\end{align}
and since $h_A(X,Y) = \gamma^{-2}\gb(\B_A(X),Y)$ Gauss' formula becomes
\begin{align}
  \nablab_XY = \nabla_XY+\frac{1}{\gamma^2}\sum_{A=1}^p\gb\paraa{\B_A(X),Y}N_A.
\end{align}
Let us now turn our attention to the curvature of $\Sigma$. Since
Nambu brackets involve sums over all vectors in the basis of
$\TSigma$, one can not expect to find expressions for quantities that
involve a choice of tangent plane, e.g. the sectional curvature
(unless $\Sigma$ is a surface). However, it turns out that one can
write the Ricci curvature as an expression involving Nambu brackets.
\begin{theorem}\label{thm:ricciCurvature}
  Let $\R$ be the Ricci curvature of $\Sigma$, considered as a map
  $\TSigma\to\TSigma$. For any $X\in\TSigma$ it holds that
  \begin{align*}
    \R(X) = g^{pd}g^{ac}\gb\paraa{\Rb(e_c,e_d)e_b,e_a}X^be_p
    +\frac{1}{\gamma^4}\sum_{A=1}^p\bracketb{(\Tr\B_A)\B_A(X)-\B_A^2(X)},
  \end{align*}
  where $\Rb$ is the curvature tensor of $M$.
\end{theorem}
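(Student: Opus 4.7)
The plan is to start from the Ricci contraction of Gauss' equation that is already recorded in the Preliminaries and then convert the extrinsic Weingarten terms into the maps $\B_A$ by invoking Propositions \ref{prop:PBSTproperties} and \ref{prop:TrPBST}. Nothing curvature-theoretic beyond Gauss' equation is needed; the content of the theorem lies entirely in the algebraic identification of $(\tr W_A) W_A - W_A^2$ with $\gamma^{-4}\bracketa{(\Tr\B_A)\B_A - \B_A^2}$ on $\TSigma$.

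Concretely, the contracted Gauss equation stated in Section \ref{sec:preliminaries} gives
\begin{align*}
  \R(X) = g^{pd}g^{ac}\gb\paraa{\Rb(e_c,e_d)e_b,e_a}X^b e_p
  + \sum_{A=1}^{p}\bracketa{(\tr W_A) W_A(X) - W_A^2(X)},
\end{align*}
so it suffices to show that for $X\in\TSigma$,
\begin{align*}
  \sum_{A=1}^{p}\bracketa{(\tr W_A) W_A(X) - W_A^2(X)}
  = \frac{1}{\gamma^4}\sum_{A=1}^{p}\bracketa{(\Tr\B_A)\B_A(X) - \B_A^2(X)}.
\end{align*}
By Proposition \ref{prop:PBSTproperties}, $\B_A(Y) = \gamma^2 W_A(Y)$ for every $Y\in\TSigma$, and by Proposition \ref{prop:TrPBST}, $\Tr\B_A = \gamma^2 \tr W_A$. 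Thus $(\Tr\B_A)\B_A(X) = \gamma^4(\tr W_A) W_A(X)$ immediately, and dividing by $\gamma^4$ gives the first term.

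The one point that needs a moment of care is the quadratic term $\B_A^2$. Formula (\ref{eq:BAX}) shows that $\B_A:TM\to TM$ is always tangent-valued, so $\B_A(X)\in\TSigma$ even when $X$ is arbitrary; in particular the second application of $\B_A$ acts on a tangent vector and therefore reduces to $\gamma^2 W_A$. This yields $\B_A^2(X) = \gamma^2 W_A(\B_A(X)) = \gamma^4 W_A^2(X)$ for $X\in\TSigma$, which completes the identification. The main (mild) obstacle is just keeping track of the powers of $\gamma^2=\sqrt{g}/\rho$ and recognising that, because $\B_A$ lands in $\TSigma$, one may safely iterate the identity $\B_A|_{\TSigma} = \gamma^2 W_A$ without having to analyse the behaviour of $\B_A$ on normal vectors.
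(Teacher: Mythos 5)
Your proposal is correct and follows essentially the same route as the paper: contract Gauss' equation to get $\R^p_b$ in terms of $(\tr W_A)W_A - W_A^2$, then substitute $\B_A|_{\TSigma}=\gamma^2 W_A$ and $\Tr\B_A=\gamma^2\tr W_A$ from Propositions \ref{prop:PBSTproperties} and \ref{prop:TrPBST}. Your extra remark that $\B_A$ is tangent-valued by (\ref{eq:BAX}), so that the iteration $\B_A^2(X)=\gamma^4 W_A^2(X)$ is legitimate, is a point the paper leaves implicit but is exactly the right justification.
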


\begin{proof}
  The Ricci curvature of $\Sigma$ is defined as
  \begin{align*}
    \R^p_b = g^{ac}g^{pd}g\paraa{R(e_c,e_d)e_b,e_a}
  \end{align*}
  and from Gauss' equation (\ref{eq:GaussEquation}) it follows that
  \begin{align*}
    \R^p_b = g^{pd}g^{ac}\gb\paraa{\Rb(e_c,e_d)e_b,e_a}
    + g^{ac}g^{pd}\sum_{A=1}^p\parab{h_{A,bd}h_{A,ac}-h_{A,bc}h_{A,ad}}.
  \end{align*}
  Since $(W_A)^a_b = g^{ac}h_{A,cb}$ one obtains
  \begin{align*}
    \R_b^p = g^{ac}g^{pd}\gb\paraa{\Rb(e_c,e_d)e_b,e_a}
    + \sum_{A=1}^p\bracketb{\paraa{\tr W_A}(W_A)^p_b-(W_A^2)_b^p},
  \end{align*}
  and as $\B_A(X)=\gamma^2 W_A(X)$ for any $X\in\TSigma$, and
  $\Tr\B_A=\gamma^2\tr W_A$, one has
  \begin{equation*}
    \R(X) = g^{ac}g^{pd}\gb\paraa{\Rb(e_c,e_d)e_b,e_a}X^be_p
    + \frac{1}{\gamma^4}\sum_{A=1}^p\bracketb{\paraa{\Tr \B_A}\B_A(X)-\B_A^2(X)}.\qedhere
  \end{equation*}
\end{proof}

\section{Construction of normal vectors}\label{sec:normalVectors}

\noindent The results in Section \ref{sec:nambuPoissonFormulation}
involve Nambu brackets of the embedding coordinates and the
components of the normal vectors. In this section we will prove that
one can replace sums over normal vectors by sums of Nambu
brackets of the embedding coordinates, thus providing expressions that
do not involve normal vectors.

It will be convenient to introduce yet another multi-index; namely, we
let $\alpha=i_1\ldots i_{p-1}$ consist of $p-1$ indices all taking
values between $1$ and $m$.

\begin{proposition}\label{prop:normalvectors}
  For any value of the multi-index $\alpha$, the vector
  \begin{align}\label{eq:Zdef}
    Z_{\alpha}=\frac{1}{\gamma\paraa{n!\sqrt{(p-1)!}}}\gb^{ij}\eps_{jk_1\cdots k_n\alpha}\{x^{k_1},\ldots,x^{k_n}\}\d_i,
  \end{align}
  where $\eps_{i_1\cdots i_m}$ is the Levi-Civita tensor of $M$, is
  normal to $\TSigma$, i.e. $\gb(Z_{\alpha},e_a)=0$ for
  $a=1,2,\ldots,n$. For hypersurfaces ($p=1$), equation (\ref{eq:Zdef})
  defines a unique normal vector of unit length.
\end{proposition}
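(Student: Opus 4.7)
The proposition splits into two parts: that $Z_\alpha$ is tangent-perpendicular for every multi-index $\alpha$, and that in codimension one the resulting vector has unit length (uniqueness up to sign being automatic since the normal space is one-dimensional).

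For the orthogonality, I would compute $\gb(Z_\alpha, e_a)$ directly. Writing $e_a = (\d_a x^\ell)\d_\ell$, the contraction $\gb^{ij}\gb_{i\ell}$ collapses to $\delta^j_\ell$, so the inner product becomes a scalar multiple of
$$\frac{1}{\rho}\eps_{jk_1\cdots k_n\alpha}\eps^{a_1\cdots a_n}\paraa{\d_a x^j}\paraa{\d_{a_1}x^{k_1}}\cdots\paraa{\d_{a_n}x^{k_n}}.$$
The key observation is that $\eps_{jk_1\cdots k_n\alpha}$ is totally antisymmetric in the $n+1$ indices $(j, k_1, \ldots, k_n)$; combined with the contractions against $(\d_a x^j)(\d_{a_i}x^{k_i})$, this forces the whole expression to be antisymmetric under exchanges of the lower indices $(a, a_1, \ldots, a_n)$. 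But these $n+1$ indices take values only in $\{1,\ldots, n\}$, so an antisymmetric tensor in $n+1$ slots on an $n$-dimensional space must vanish. Equivalently, the expression encodes the wedge $e_a \wedge e_{a_1}\wedge\cdots\wedge e_{a_n}$ of $n+1$ tangent vectors inside the $n$-dimensional space $\TSigma$, which is zero by linear dependence. Thus $\gb(Z_\alpha, e_a) = 0$.

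For the hypersurface case ($p=1$, $m = n+1$, $\alpha$ empty), I would compute $|Z|^2$ directly:
$$\gb(Z,Z) = \frac{1}{\gamma^2(n!)^2}\gb^{jl}\eps_{jk_1\cdots k_n}\eps_{ll_1\cdots l_n}\{x^{k_1},\ldots,x^{k_n}\}\{x^{l_1},\ldots,x^{l_n}\}.$$
Applying the standard contraction identity in ambient dimension $n+1$,
$$\gb^{jl}\eps_{jk_1\cdots k_n}\eps_{ll_1\cdots l_n} = n!\,\gb_{k_1[l_1}\cdots\gb_{k_n l_n]},$$
and noting that both Nambu brackets are already antisymmetric in their upper indices so the explicit antisymmetrization on the right can be dropped, each factor $\gb_{k_r l_r}$ contracted with $(\d_{a_r}x^{k_r})(\d_{b_r}x^{l_r})$ from the bracket expansions produces the induced metric $g_{a_r b_r}$. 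What remains is $\eps^{a_1\cdots a_n}\eps^{b_1\cdots b_n}g_{a_1 b_1}\cdots g_{a_n b_n} = n!\det g$, the standard determinant formula (equivalent, upon tracing, to the cofactor identity recalled before Proposition \ref{prop:PBSTproperties}). Combined with the $1/\rho^2$ from the two brackets and the definition $\gamma^2 = g/\rho^2$, the prefactors collapse to $1$.

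The conceptual step---a dimension count for orthogonality---is clean; the main obstacle is bookkeeping. One must keep straight the two kinds of Levi-Civita objects in play (the ambient tensor $\eps_{i_1\cdots i_m}$ on $M$ carrying an implicit $\sqrt{\det\gb}$ versus the bare symbol $\eps^{a_1\cdots a_n}$ on $\Sigma$), and track the factorials $n!$ and $(p-1)!$ together with the factor $\gamma$ in the denominator so that the asserted normalization $1/(\gamma n!\sqrt{(p-1)!})$ falls out exactly.
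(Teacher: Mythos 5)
Your proposal is correct and follows essentially the same route as the paper: orthogonality via the antisymmetry of $\eps_{jk_1\cdots k_n\alpha}$ forcing an antisymmetric contraction of $n+1$ surface indices ranging over only $n$ values, and the unit-length claim via the contracted Levi-Civita identity in ambient dimension $n+1$. The only cosmetic difference is at the final step of the norm computation, where you evaluate $\gb_{KL}\{x^K\}\{x^L\}$ directly through $\eps^{a_1\cdots a_n}\eps^{b_1\cdots b_n}g_{a_1b_1}\cdots g_{a_nb_n}=n!\det g$, whereas the paper identifies this quantity with $(n-1)!\,\Tr\P^2$ and invokes $\Tr\P^2=n\gamma^2$; these are the same determinant computation.
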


\begin{proof}
  To prove that $Z_\alpha$ are normal vectors, one simply notes that
  \begin{align*}
    \gamma\paraa{n!\sqrt{(p-1)!}}\gb(Z_\alpha,e_a) &= 
    \frac{1}{\rho}\eps^{a_1\cdots a_n}\eps_{jk_1\cdots k_n\alpha}\paraa{\d_ax^j}\paraa{\d_{a_1}x^{k_1}}\cdots\paraa{\d_{a_n}x^{k_n}}=0,
  \end{align*}
  since the $n+1$ indices $a,a_1,\ldots,a_n$ can only take on $n$
  different values and since
  $(\d_ax^j)(\d_{a_1}x^{k_1})\cdots(\d_{a_n}x^{k_n})$ is contracted
  with $\eps_{jk_1\cdots k_n\alpha}$ which is completely antisymmetric
  in $j,k_1,\ldots,k_n$. Let us now calculate $|Z|^2\equiv\gb(Z,Z)$
  when $p=1$. Using that\footnote{In our convention, no combinatorial factor is included in the antisymmetrization; for instance,  
    $\delta^{[i}_{[k}\delta^{j]}_{l]}=\delta^i_k\delta^j_l-\delta^i_l\delta^j_k$.}
  \begin{align*}
    \eps_{ik_1\cdots k_n}\eps^{il_1\cdots l_n} = \delta^{[l_1}_{[k_1}\cdots\delta^{l_n]}_{k_n]}
  \end{align*}
  one obtains
  \begin{align*}
    |Z|^2 &= \frac{1}{\gamma^2n!^2}\gb_{l_1l_1'}\cdots\gb_{l_nl_n'}
    \eps_{ik_1\cdots k_n}\eps^{il_1\cdots l_n}
    \{x^{k_1},\ldots,x^{k_n}\}\{x^{l_1'},\ldots,x^{l_n'}\}\\
    &=\frac{1}{\gamma^2n!^2}\gb_{l_1l_1'}\cdots\gb_{l_nl_n'}
    \delta^{[l_1}_{[k_1}\cdots\delta^{l_n]}_{k_n]}
    \{x^{k_1},\ldots,x^{k_n}\}\{x^{l_1'},\ldots,x^{l_n'}\}\\
    &=\frac{1}{\gamma^2n!}\{x^{l_1},\ldots,x^{l_n}\}
    \gb_{l_1l_1'}\cdots\gb_{l_nl_n'}\{x^{l_1'},\ldots,x^{l_n'}\}\\
    &=\frac{1}{\gamma^2n!}(n-1)!\Tr\P^2 = \frac{1}{\gamma^2n!}(n-1)!n\gamma^2=1,
  \end{align*}
  which proves that $Z$ has unit length.
\end{proof}

\noindent If the codimension is greater than one, $Z_\alpha$ defines
more than $p$ non-zero normal vectors that do not in general fulfill any
orthonormality conditions. In principle, one can now apply the
Gram-Schmidt orthonormalization procedure to obtain a set of $p$
orthonormal vectors. However, it turns out that one can use $Z_\alpha$
to construct another set of normal vectors, avoiding explicit use of
the Gram-Schmidt procedure; namely, introduce
\begin{align*}
  \Z_{\alpha}^{\beta} = \gb(Z_{\alpha},Z^\beta),
\end{align*}
and consider it as a matrix over multi-indices $\alpha$ and
$\beta$. As such, the matrix is symmetric (with respect to
$\gb_{\alpha\beta}\equiv \gb_{i_1j_1}\cdots\gb_{i_{p-1}j_{p-1}}$) and
we let $E_\alpha,\mu_\alpha$ denote orthonormal eigenvectors
(i.e. $\gb_{\delta\sigma}E_\alpha^\delta
E_\beta^\sigma=\delta_{\alpha\beta}$) and their corresponding
eigenvalues. Using these eigenvectors to define
\begin{align*}
  \Nh_\alpha = E^{\beta}_\alpha Z_\beta
\end{align*}
one finds that
$\gb(\Nh_\alpha,\Nh_\beta)=\mu_\alpha\delta_{\alpha\beta}$, i.e. the
vectors are orthogonal.

\begin{proposition}\label{prop:Zprojection}
  For $\Z_\alpha^\beta=\gb_{ij}Z^i_\alpha Z^{j\beta}$ it holds that
  \begin{align}
    \Z_\alpha^\delta\Z_\delta^\beta = \Z_\alpha^\beta\label{eq:Zidempot}\\
    \Z_\alpha^\alpha = p.\label{eq:Ztrace}
  \end{align}
\end{proposition}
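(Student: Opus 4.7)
The plan is to package both identities into a single geometric statement: namely, that $\Z_\alpha^\beta$ is the matrix (indexed by multi-indices) of the orthogonal projection $\Pi\colon TM\to\TSigma^\perp$, expressed in the spanning family $\{Z_\alpha\}$. Idempotence and the trace formula are then immediate consequences of $\Pi^2=\Pi$ and $\tr\Pi=\dim\TSigma^\perp=p$.

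Concretely, I would introduce the $(1,1)$-tensor
\[
\Pi^i{}_j := \gb^{\alpha\alpha'}Z^i_\alpha\,\gb_{jk}Z^k_{\alpha'}
\]
and show it is exactly the normal projection. One direction is trivial: for $X\in\TSigma$, Proposition~\ref{prop:normalvectors} gives $\gb(Z_{\alpha'},X)=0$, so $\Pi(X)=0$. The content of the proposition reduces to the lemma
\[
\sum_{\alpha,\alpha'}\gb^{\alpha\alpha'}\gb(Z_\alpha,N_A)\gb(Z_{\alpha'},N_B)=\delta_{AB}
\]
for any orthonormal normal basis $\{N_A\}_{A=1}^p$. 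I would prove the lemma by first writing $\gb(Z_\alpha,N_A)=\frac{1}{\gamma\,n!\sqrt{(p-1)!}}\,\eps_{jk_1\cdots k_n\alpha}\{x^{k_1},\ldots,x^{k_n}\}n_A^j$, then contracting on $\alpha,\alpha'$ via the standard Levi-Civita identity
\[
\eps_{jk_1\cdots k_n\alpha}\eps^{j'k'_1\cdots k'_n\alpha}=(p-1)!\,\delta^{j'k'_1\cdots k'_n}_{jk_1\cdots k_n},
\]
and expanding the resulting generalized Kronecker delta as a signed sum over $S_{n+1}$.

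The decisive point is that only the identity permutation (pairing $j$ with $j'$ and $k_s$ with $k'_s$) contributes: in every other term, at least one index $k'_s$ ends up in a factor $n_{A,k'_s}$ that gets summed against a slot $\d_b x^{k'_s}$ of the Nambu bracket, producing $\sum_k n_{A,k}\,\d_b x^k=\gb(N_A,e_b)=0$ by tangent--normal orthogonality. The surviving term collapses, using the antisymmetry of $\{x^{k_1},\ldots,x^{k_n}\}$, to $(n!)^2\gamma^2\,\delta_{AB}$ after invoking the bracket identity $\sum_k\{x^{k_1},\ldots,x^{k_n}\}\{x_{k_1},\ldots,x_{k_n}\}=n!\gamma^2$ (this is exactly $\Tr\P^2=n\gamma^2$ from Proposition~\ref{prop:TrPBST} written in raw indices). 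All combinatorial factors then cancel cleanly against the prefactor $\frac{(p-1)!}{\gamma^2(n!)^2(p-1)!}$, leaving $\delta_{AB}$.

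With $\Pi$ thus identified, the proposition is immediate. For (\ref{eq:Ztrace}), $\Z_\alpha^\alpha=\Pi^i{}_i=\tr\Pi=p$. For (\ref{eq:Zidempot}) one computes
\[
\Z_\alpha^\delta\Z_\delta^\beta = (Z_\alpha)_j\,\Pi^j{}_k\,\gb^{\beta\beta'}Z^k_{\beta'} = (Z_\alpha)_j\,\gb^{\beta\beta'}Z^j_{\beta'} = \Z_\alpha^\beta,
\]
since $Z_{\beta'}\in\TSigma^\perp$ is fixed by the normal projection. I expect the main obstacle to be the combinatorial bookkeeping in the key lemma, in particular carefully isolating the single surviving permutation in the generalized Kronecker delta and keeping track of the $n!$, $(p-1)!$ and $\gamma$ factors; once the lemma is in hand, the rest is essentially linear algebra on the normal bundle.
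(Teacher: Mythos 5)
Your proposal is correct and follows essentially the same route as the paper: both arguments come down to identifying $\gb^{\alpha\alpha'}Z_\alpha^i\gb_{jk}Z^k_{\alpha'}$ with the orthogonal projection onto $\TSigma^\perp$ via the contraction identity $\eps_{kk_1\cdots k_n\alpha}\eps^{ll_1\cdots l_n\alpha}=(p-1)!\,\delta^{[l}_{[k}\delta^{l_1}_{k_1}\cdots\delta^{l_n]}_{k_n]}$, after which idempotence and the trace formula are linear algebra. The only organizational difference is that the paper records the closed formula $Z^i_\alpha Z^{j\alpha}=\gb^{ij}-\gamma^{-2}(\P^2)^{ij}$ and reads off both claims from it, whereas you verify the projection property by evaluating against a tangent/normal frame; the combinatorial core (only the permutations fixing the $j$-slot survive, the rest dying against $\gb(N_A,e_b)=0$, and the $n!$, $(p-1)!$, $\gamma^2$ factors cancelling) is the same in both.
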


\begin{proof}
  Both statements can be easily proven once one has the following result
  \begin{align*}
    Z^i_\alpha Z^{j\alpha} = \gb^{ij}-\frac{1}{\gamma^2}\paraa{\P^2}^{ij},
  \end{align*}
  which is obtained by using that
  \begin{align*}
    \eps_{kk_1\cdots k_n\alpha}\eps^{ll_1\cdots l_n\alpha} = 
    (p-1)!\parab{\delta^{[l}_{[k}\delta^{l_1}_{k_1}\cdots\delta^{l_n]}_{k_n]}}.
  \end{align*}
  Formula
  (\ref{eq:Ztrace}) is now immediate, and to obtain
  (\ref{eq:Zidempot}) one notes that since $\Z_\alpha\in\TSigma^\perp$
  it holds that $\P^2(\Z_\alpha)=0$, due to the fact that $\P^2$ is
  proportional to the projection onto $\TSigma$.
\end{proof}

\noindent From Proposition \ref{prop:Zprojection} it follows that an
eigenvalue of $\Z$ is either 0 or 1, which implies that $\Nh_\alpha=0$
or $\gb(\Nh_\alpha,\Nh_\alpha)=1$, and that the number of non-zero
vectors is $\Tr\Z = \Z_\alpha^\alpha=p$. Hence, the $p$ non-zero
vectors among $\Nh_\alpha$ constitute an orthonormal basis of
$\TSigma^\perp$, and it follows that one can replace any sum over
normal vectors $N_A$ by a sum over the multi-index of $\Nh_\alpha$. As
an example, let us work out some explicit expressions in the case when
$M=\reals^m$.

\begin{proposition}\label{prop:BARm}
  Assume that $M=\reals^m$ and that all repeated indices are summed
  over. For any $X\in\TSigma$ one has
  \begin{align*}
    &\sum_{A=1}^p\paraa{\Tr\B_A}\B_A(X)^i
    =\frac{\eps_{jj'K\alpha}\eps_{klL\alpha}}{\gamma^2c(n,p)^2} 
    \{x^j,\xv^I\}\{\xv^I,\{x^{j'},\xv^K\}\}
    \{x^i,\xv^J\}\{\xv^J,\{x^l,\xv^L\}\}X^k\\
    &\sum_{A=1}^p\B_A^2(X)^i
    =\frac{\eps_{jj'K\alpha}\eps_{klL\alpha}}{\gamma^2c(n,p)^2} 
    \{x^i,\xv^I\}\{\xv^I,\{x^{j'},\xv^K\}\}
    \{x^j,\xv^J\}\{\xv^J,\{x^l,\xv^L\}\}X^k\\
    &\sum_{A=1}^p\paraa{\Tr\B_A}N_A
    =\frac{(-1)^n}{n}\frac{\eps_{ikK\alpha}\eps_{jlL\alpha}}{\gamma^2c(n,p)^2}
    \{x^i,\xv^I\}\{\xv^I,\{x^k,\xv^K\}\}\{x^l,\xv^L\}\d_j
  \end{align*}
  where
  \begin{align*}
    c(n,p)=n!(n-1)!\sqrt{(p-1)!}.
  \end{align*}
\end{proposition}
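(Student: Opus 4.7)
The approach is to reduce all three identities in $M=\reals^m$ to the single fact that $\sum_{A=1}^pn_A^in_A^j$ equals the projection onto the normal space, which by the proof of Proposition \ref{prop:Zprojection} equals $\sum_\alpha Z_\alpha^iZ^{j\alpha}$. After inserting the explicit Nambu bracket formula for $Z_\alpha$ from Proposition \ref{prop:normalvectors}, this produces exactly the $\eps_{\cdots\alpha}\eps_{\cdots\alpha}\{x^\cdot,\xv^\cdot\}\{x^\cdot,\xv^\cdot\}$ structures visible on each right-hand side.

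The first step is to set up the Euclidean simplifications: $\Gammab^i_{jk}=0$, $\nablab_aN_A=(\d_an_A^i)\d_i$, and $\T_A^{Ij}=\frac{(-1)^n}{\sqrt{(n-1)!}}\{\xv^I,n_A^j\}$. I would then establish the auxiliary bracket identity
\begin{align*}
  \sum_I\{f,\xv^I\}\{\xv^I,h\}=(-1)^{n-1}(n-1)!\,\gamma^2g^{ab}(\d_af)(\d_bh),
\end{align*}
proved by expanding both brackets and applying the cofactor-expansion lemma to $\eps^{a\av}\eps^{b\cv}g_{a_1c_1}\cdots g_{a_{n-1}c_{n-1}}=(n-1)!\,gg^{ab}$. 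Combined with the orthogonality relation $(\d_ax^j)(\d_bn_A^j)=-(\d_a\d_bx^j)n_A^j$ (differentiating $n_A\cdot e_a=0$), this yields the tangential forms $\Tr\B_A=\gamma^2g^{ab}(\d_a\d_bx^k)n_A^k$ and $\B_A(X)^i=\gamma^2g^{ab}(\d_ax^i)(\d_b\d_cx^k)n_A^kX^c$ for $X=X^ce_c\in\TSigma$.

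Substituting these expressions into each left-hand side reduces it to a product of factors of the shape $g^{\cdot\cdot}(\d x)$ or $g^{\cdot\cdot}(\d\d x)$ times $\sum_An_A^{j_1}n_A^{j_2}$. Replacing the latter by $\sum_\alpha Z^{j_1}_\alpha Z^{j_2\alpha}$ and plugging in the formula $Z_\alpha^j=\frac{1}{\gamma n!\sqrt{(p-1)!}}\eps_{jk_1K\alpha}\{x^{k_1},\xv^K\}$ introduces the external $\eps_{\cdots\alpha}$ tensors and the outer $\{x^{\cdot},\xv^{\cdot}\}$ brackets appearing on the right-hand side. The final move is to recognize each remaining $g^{ab}(\d_ax^j)(\d_b\,\cdot\,)$ factor as the auxiliary identity run in reverse, absorbing the second derivatives into the nested brackets $\{\xv^I,\{x^{j'},\xv^K\}\}$. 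Carrying this out for each of the three left-hand sides---in which the ``$N_A$'' slot is respectively $W_A(X)$, $(W_A^2)(X)$, or $N_A$ itself---produces the three displayed formulas.

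The main obstacle will be the bookkeeping of signs and factorials. One picks up $(-1)^n$ from each $\T_A$, $(-1)^{n-1}$ from each application of the auxiliary bracket identity, and further signs from antisymmetrizing $\eps$-tensors whenever indices are reshuffled; these must combine to produce the overall $(-1)^n/n$ present only in the third formula. Likewise, the factorials in $c(n,p)^2=(n!)^2((n-1)!)^2(p-1)!$ must assemble correctly from the $(n-1)!$ prefactors of the bracket identity, the $n!$ in each $Z_\alpha$, and the $\sqrt{(p-1)!}$ from its normalization. Once this numerology is in hand, all three identities follow from essentially the same computation, differing only in the outer index placement.
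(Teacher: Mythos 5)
Your proposal is correct in outline and rests on the same pivotal fact as the paper---that by Propositions \ref{prop:normalvectors} and \ref{prop:Zprojection} the sum over the orthonormal normals $N_A$ can be traded for a sum over the bracket-built vectors $Z_\alpha$---but it implements that trade by a genuinely different mechanism. The paper stays inside the bracket formalism throughout: it writes, e.g., $\sum_A(\Tr\B_A)\B_A(X)^i=\tfrac{1}{(n-1)!^2}\sum_A\{x^j,\xv^I\}\{\xv^I,n_A^j\}\{x^i,\xv^J\}\{\xv^J,n_A^k\}X^k$ with the normal components sitting \emph{inside} the brackets, substitutes $N_A\to\Nh_\alpha=E_\alpha^\beta Z_\beta$, and observes that the coefficient functions $E_\alpha^\beta$ pull out of the brackets because the Leibniz cross-terms carry a factor $\sum_j\{x^j,\xv^I\}Z_\beta^j\propto\gb(e_a,Z_\beta)=0$; orthonormality of the $E_\alpha$ then collapses the sum, and the definition of $Z_\alpha$ is inserted. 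You instead descend to classical tensor calculus via your auxiliary contraction identity, make the replacement at the level of the undifferentiated projector $\sum_An_A^{j_1}n_A^{j_2}=\sum_\alpha Z_\alpha^{j_1}Z^{j_2\alpha}$, where no Leibniz subtlety arises, and then climb back up. The paper's route is shorter and needs no conversion in and out of brackets; yours replaces the ``pull $E_\alpha^\beta$ through the bracket'' step by a pointwise linear-algebra identity, at the price of the deferred sign-and-factorial bookkeeping. That bookkeeping is your only real exposure: when running the auxiliary identity in reverse you must again invoke $(\d_ax^j)Z_{j\alpha}=0$ (not merely the corresponding relation for $n_A$) to discard the terms in which derivatives hit the $\gamma^{-1}$ inside $Z_\alpha$, and the constants have to be verified case by case---for the first identity they do assemble to the stated $\gamma^{-2}c(n,p)^{-2}$.
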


\begin{proof}
  Let us prove the formula involving $(\Tr\B_A)\B_A(X)$; the other formulas
  can be proven analogously. For $\reals^m$ one has
  \begin{align*} 
    \sum_A\paraa{\Tr\B_A}\B_A(X)^i=
    \frac{1}{(n-1)!^2}\sum_A\{x^j,\xv^I\}\{\xv^I,n_A^j\}\{x^i,\xv^J\}\{\xv^J,n_A^k\}X^k,
  \end{align*}
  and since the non-zero vectors in the set $\{\Nh_\alpha\}$ consist
  of exactly $p$ orthonormal vectors one can write
  \begin{align*} 
    \sum_A\paraa{\Tr\B_A}\B_A(X)^i&=
    \frac{1}{(n-1)!^2}\sum_\alpha\{x^j,\xv^I\}\{\xv^I,\Nh_\alpha^j\}\{x^i,\xv^J\}\{\xv^J,\Nh_\alpha^k\}X^k\\
    & =\frac{1}{(n-1)!^2}\sum_\alpha\{x^j,\xv^I\}\{\xv^I,E_\alpha^\beta Z_\beta^j\}\{x^i,\xv^J\}\{\xv^J,E_\alpha^\epsilon Z_\epsilon^k\}X^k.
  \end{align*}
  Now, one notes that
  \begin{align*}
    &\sum_j\{x^j,\xv^I\}\{\xv^I,E_\alpha^\beta Z_\beta^j\}
    =\sum_j E_\alpha^\beta\{x^j,\xv^I\}\{\xv^I,Z_\beta^j\}\\
    &\sum_i\{x^i,\xv^J\}\{\xv^J,E_\alpha^\epsilon Z_\epsilon^k\}X^k
    =\sum_iE_\alpha^\epsilon\{x^j,\xv^I\}\{\xv^I,Z_\epsilon^k\}X^k
  \end{align*}
  since the terms with $Z_\beta^j$ and $Z_\epsilon^k$ outside the
  Poisson bracket vanish due to the appearance of a scalar product
  with a tangent vector. Thus, one obtains
  \begin{align*}
    \sum_A\paraa{\Tr\B_A}\B_A(X)^i&=
    \frac{1}{(n-1)!^2}\sum_\alpha E_\alpha^\beta E_\alpha^\epsilon\{x^j,\xv^I\}\{\xv^I,Z_\beta^j\}\{x^i,\xv^J\}\{\xv^J,Z_\epsilon^k\}X^k,
  \end{align*}
  and since $\sum_\alpha E_\alpha^\beta E_\alpha^\epsilon=\delta^{\beta\epsilon}$ the result is
  \begin{align*}
    \sum_A\paraa{\Tr\B_A}\B_A(X)^i&=
    \frac{1}{(n-1)!^2}\sum_\alpha\{x^j,\xv^I\}\{\xv^I,Z_\alpha^j\}\{x^i,\xv^J\}\{\xv^J,Z_\alpha^k\}X^k,
  \end{align*}
  from which the statement follows by inserting the definition of $Z_\alpha$.
\end{proof}

\noindent For hypersurfaces in $\reals^{n+1}$, the ``Theorema Egregium'' states
that the determinant of the Weingarten map, i.e the ``Gaussian
curvature'', is an invariant (up to a sign when $\Sigma$ is
odd-dimensional) under isometries (this is in fact also true for
hypersurfaces in a manifold of constant sectional curvature). From
Proposition \ref{prop:TrPBST} we know that one can express $\det W_A$
in terms of $\Tr\S_A\T_A$. 
\begin{proposition}
  Let $\Sigma$ be a hypersurface in $\reals^{n+1}$ and let $W$ denote
  the Weingarten map with respect to the unit normal
  \begin{align*}
    Z = \frac{1}{\gamma n!}\gb^{ij}\eps_{jkK}\{x^k,\xv^K\}.
  \end{align*}
  Then one can write $\det W$ as 
  \begin{align*}
    \det W = -\frac{1}{\gamma(\gamma n!)^{n+1}}&\sum
    \eps_{ilL}\eps_{j_1k_1K_1}\cdots\eps_{j_{n-1}k_{n-1}K_{n-1}}\\
    &\times\{x^i,\{x^{k_1},\xv^{K_1}\},\ldots,\{x^{k_{n-1}},\xv^{K_{n-1}}\}\}
    \{\xv^J,\{x^l,\xv^L\}\}.
  \end{align*}
\end{proposition}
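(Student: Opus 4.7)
The starting point is Proposition~\ref{prop:TrPBST}, which in the hypersurface case ($p=1$; I omit the subscript $A$) gives $\det W=-(n\gamma^{2})^{-1}\Tr\S\T$. Unfolding the definitions of $\S^{iI}$ and $\T^{Ii}$ in $\reals^{n+1}$ (where $\nablab=\d$), and absorbing the sign $(-1)^{n-1}$ produced by cycling the last Latin index of $\eps^{\av a}$ in $\T^{Ii}$ into the antisymmetry of the Nambu bracket, I would first rewrite this as
\begin{align*}
\det W=-\frac{1}{n!\,\gamma^{2}}\sum_{i,J}\{x^{i},n^{j_{1}},\ldots,n^{j_{n-1}}\}\{x^{j_{1}},\ldots,x^{j_{n-1}},n^{i}\},
\end{align*}
and then insert the formula for the unit normal from Proposition~\ref{prop:normalvectors}, $n^{i}=(\gamma n!)^{-1}\eps_{ikK}\{x^{k},\xv^{K}\}$, into each of the $n$ occurrences of a normal component.

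Because the prefactor $(\gamma n!)^{-1}$ contains the non-constant function $1/\gamma$, this insertion does not commute with the Nambu bracket; the Leibniz rule must be applied in each of the $n-1$ normal slots of the first bracket and in the single normal slot of the second. Writing $n^{j_{\alpha}}=h_{\alpha}/\gamma$ with $h_{\alpha}=(n!)^{-1}\eps_{j_{\alpha}k_{\alpha}K_{\alpha}}\{x^{k_{\alpha}},\xv^{K_{\alpha}}\}=\gamma n^{j_{\alpha}}$, the \emph{main} term of the expansion pulls all $n-1$ factors $1/\gamma$ out of the first bracket and the single $1/\gamma$ out of the second, producing precisely the nested Nambu-bracket structure on the right-hand side of the statement.

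The main obstacle is to show that every other Leibniz term vanishes, and I plan to invoke two complementary mechanisms. First, whenever $1/\gamma$ ends up in two or more slots of the same Nambu bracket, the bracket has two coincident arguments and vanishes by antisymmetry; this restricts the surviving extra terms to those with at most one $h_{\alpha}$ pulled outside each bracket. Second, such an extra term carries an external factor $n^{j_{\alpha}}$ (because $h_{\alpha}=\gamma n^{j_{\alpha}}$); when one sums over $j_{\alpha}$, this factor contracts with the derivative $\d_{a_{\alpha}}x^{j_{\alpha}}$ sitting inside the companion bracket $\{x^{j_{1}},\ldots,x^{j_{n-1}},n^{i}\}$, giving $n\cdot e_{a_{\alpha}}=0$ by tangent--normal orthogonality. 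The analogous contraction through the free index $i$ handles the single extra term from the substitution of $n^{i}$ in the second bracket, as well as all cross-products of extras from the two brackets.

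Finally, I would tally factors: the overall prefactor $-(n!\,\gamma^{2})^{-1}$, combined with $\gamma^{-(n-1)}(n!)^{-(n-1)}$ extracted from the first bracket and $(\gamma n!)^{-1}$ extracted from the second, yields exactly $-\bigl(\gamma(\gamma n!)^{n+1}\bigr)^{-1}$, while the residual $\eps$-tensors and nested brackets reproduce the expression displayed in the proposition.
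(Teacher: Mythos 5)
Your proposal is correct and follows exactly the route the paper intends (which it only sketches via the remark preceding the proposition): combine $\frac{1}{n}\Tr\S\T=-\gamma^{2}\det W$ from Proposition~\ref{prop:TrPBST} with the explicit normal $Z$ from Proposition~\ref{prop:normalvectors}, and your bookkeeping of the prefactor $-\bigl(\gamma(\gamma n!)^{n+1}\bigr)^{-1}$ checks out. You also correctly identify and dispose of the one genuinely nontrivial point that the paper leaves silent, namely that the Leibniz terms generated by the non-constant factor $1/\gamma$ all vanish, either by two coincident arguments in a bracket or by the contraction $\sum_j n^j\,\d_a x^j=\gb(N,e_a)=0$.
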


\noindent In fact, one can express all the elementary symmetric
functions of the principle curvatures in terms of Nambu brackets as
follows: The $k$'th elementary symmetric function of the eigenvalues of
$W$ is given as the coefficient of $t^k$ in $\det(W-t\mid)$. Since
$\B(X)=0$ for all $X\in\TSigma^\perp$ and $\B(X)=\gamma^2W(X)$ for all
$X\in\TSigma$, it holds that
\begin{align*}
  -t\det(W-t\mid_n) = \det(\gamma^{-2}\B-t\mid_{n+1})
  =\frac{1}{\gamma^{2(n+1)}}\det(\B-t\gamma^2\mid_{n+1})
\end{align*}
which implies that the $k$'th symmetric function is given by the
coefficient of $t^{k+1}$ in $-\det(\B-t\gamma^2\mid)\gamma^{2(n-k)}$.

\section{The Codazzi-Mainardi equations}\label{sec:CodazziMainardi}

\noindent When studying the geometry of embedded manifolds, the Codazzi-Mainardi
equations are very useful. In this section we reformulate these equations
in terms of Nambu brackets.

The Codazzi-Mainardi equations express the normal component
of $\Rb(X,Y)Z$ in terms of the second fundamental forms; namely
\begin{align}\label{eq:CMh}
  \begin{split}
    \gb\paraa{&\Rb(X,Y)Z,N_A} = \paraa{\nabla_Xh_A}(Y,Z) - \paraa{\nabla_Yh_A}(X,Z)\\
    &+\sum_{A=1}^p\bracketb{\gb(D_XN_B,N_A)h_B(Y,Z)-\gb(D_YN_B,N_A)h_B(X,Z)},
  \end{split}
\end{align}
for $X,Y,Z\in\TSigma$ and $A=1,\ldots,p$. Defining 
\begin{align}
  \begin{split}
    \W_A&(X,Y) = \paraa{\nabla_XW_A}(Y)-\paraa{\nabla_Y W_A}(X)\\
    &+\sum_{B=1}^p\bracketb{\gb(D_XN_B,N_A)W_B(Y)-\gb(D_YN_B,N_A)W_B(X)}
  \end{split}
\end{align}
one can rewrite the Codazzi-Mainardi equations as follows.
\begin{proposition}\label{prop:CMWPi}
  Let $\Pi$ denote the projection onto $\TSigma^\perp$.  Then the
  Codazzi-Mainardi equations are equivalent to
  \begin{align}\label{eq:CMCA}
    \W_A(X,Y) = -(\mid-\Pi)\paraa{\Rb(X,Y)N_A}
  \end{align}
  for $X,Y\in\TSigma$ and $A=1,\ldots,p$.
\end{proposition}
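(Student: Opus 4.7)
\noindent The plan is to exploit the antisymmetry of the curvature tensor $\Rb$ in its last two arguments in order to convert (\ref{eq:CMh}), which prescribes the normal component of $\Rb(X,Y)Z$ for tangent $Z$, into an assertion about the tangential component of $\Rb(X,Y)N_A$. Since for $Z\in\TSigma$ and $N_A\in\TSigma^\perp$ one has
\begin{align*}
  \gb\paraa{\Rb(X,Y)Z,N_A} = -\gb\paraa{\Rb(X,Y)N_A,Z},
\end{align*}
prescribing the scalar in (\ref{eq:CMh}) for every tangent $Z$ is the same as prescribing $(\mid-\Pi)\paraa{\Rb(X,Y)N_A}$, so I only need to identify this tangential component with $-\W_A(X,Y)$.

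\smallskip

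\noindent Next, I would pair $\W_A(X,Y)$ with an arbitrary $Z\in\TSigma$ and check that the resulting scalar coincides with the right-hand side of (\ref{eq:CMh}). The only non-tautological step is the identity
\begin{align*}
  \gb\paraa{(\nabla_X W_A)(Y),Z} = (\nabla_X h_A)(Y,Z),
\end{align*}
which follows by expanding $(\nabla_X W_A)(Y)=\nabla_X\paraa{W_A(Y)}-W_A(\nabla_X Y)$, invoking Weingarten's equation $h_A(Y,Z)=\gb\paraa{W_A(Y),Z}$, and using $\gb$-compatibility of $\nabla$ on $\TSigma$. The remaining terms in the definition of $\W_A$ pair with $Z$ via $\gb\paraa{W_B(Y),Z}=h_B(Y,Z)$, so that the full pairing reproduces term by term the right-hand side of (\ref{eq:CMh}).

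\smallskip

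\noindent Combining the two steps, (\ref{eq:CMh}) is equivalent to $\gb\paraa{\W_A(X,Y),Z}=-\gb\paraa{\Rb(X,Y)N_A,Z}$ for every $Z\in\TSigma$. Since $\W_A(X,Y)\in\TSigma$ by construction, and $(\mid-\Pi)\paraa{\Rb(X,Y)N_A}$ is by definition the tangential component, non-degeneracy of $g$ on $\TSigma$ promotes the scalar identity to the coordinate-free statement (\ref{eq:CMCA}). The only genuine hazard is sign bookkeeping: the normal connection is antisymmetric, $\gb(D_XN_A,N_B)=-\gb(D_XN_B,N_A)$, so one must verify that the slot ordering in $\gb(D_\bullet N_B, N_A)$ matches between the definition of $\W_A$ and the sum in (\ref{eq:CMh}); once that is checked, the equivalence reduces to the algebraic repackaging above.
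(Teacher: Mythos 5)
Your proposal is correct and follows essentially the same route as the paper: rewrite the right-hand side of (\ref{eq:CMh}) as $\gb\paraa{\W_A(X,Y),Z}$ via Weingarten's equation and metric compatibility, use the antisymmetry $\gb(\Rb(X,Y)Z,N_A)=-\gb(\Rb(X,Y)N_A,Z)$, and conclude from non-degeneracy of $g$ on $\TSigma$ together with $\W_A(X,Y)\in\TSigma$. Your extra care about the slot ordering in $\gb(D_XN_B,N_A)$ is a useful check (the orderings do match between the definition of $\W_A$ and (\ref{eq:CMh})), but it does not change the argument.
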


\begin{proof}
  Since $h_A(X,Y)=\gb(W_A(X),Y)$ (by Weingarten's equation) one can
  rewrite (\ref{eq:CMh}) as
  \begin{align}
    \gb\paraa{\W_A(X,Y),Z} = \gb\paraa{\Rb(X,Y)Z,N_A},
  \end{align}
  and since $\gb(\Rb(X,Y)Z,N_A) = -\gb(\Rb(X,Y)N_A,Z)$ this becomes
  \begin{align}
    \gb\paraa{\W_A(X,Y)+\Rb(X,Y)N_A,Z} = 0.
  \end{align}
  That this holds for all $Z\in\TSigma$ is equivalent to saying that 
  \begin{align}
    (\mid-\Pi)\paraa{\W_A(X,Y)+\Rb(X,Y)N_A} = 0,
  \end{align}
  from which (\ref{eq:CMCA}) follows since $\W_A(X,Y)\in\TSigma$.
\end{proof}

\noindent Note that since $\gamma^{-2}\P^2$ is the projection onto
$\TSigma$ one can write (\ref{eq:CMCA}) as
\begin{align}\label{eq:CMPgamma}
  \gamma^2\W_A(X,Y) = -\P^2\paraa{\Rb(X,Y)N_A}.
\end{align}
\noindent Since both $W_A$ and $D_X$ can be expressed in terms of
$\B_A$, one obtains the following expression for $\W_A$:
\begin{proposition}
  For $X,Y\in\TSigma$ one has
  \begin{align*}
    \gamma^2\W_A(X,Y) = &\paraa{\nablab_X\B_A}(Y)-\paraa{\nablab_Y\B_A}(X)\\
    &-\frac{1}{\gamma^2}\bracketb{\paraa{\nabla_X\gamma^2}\B_A(Y)-\paraa{\nabla_Y\gamma^2}\B_A(X)}\\
    &+\frac{1}{\gamma^2}\sum_{B=1}^p\bracketb{\gb\paraa{\B_A(N_B),X}\B_B(Y)-\gb\paraa{\B_A(N_B),Y}\B_B(X)}.
  \end{align*}
\end{proposition}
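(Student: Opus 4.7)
The plan is to substitute the dictionary from Section~\ref{sec:nambuPoissonFormulation} into the definition
\begin{align*}
\W_A(X,Y) = (\nabla_X W_A)(Y)-(\nabla_Y W_A)(X) + \sum_{B=1}^p\bracketb{\gb(D_X N_B, N_A)W_B(Y)-\gb(D_Y N_B, N_A)W_B(X)}
\end{align*}
and rearrange. The two identifications I would use are $W_A(Z)=\gamma^{-2}\B_A(Z)$ for $Z\in\TSigma$ (Proposition~\ref{prop:PBSTproperties}) and $\gb(\B_B(N_A),X)=\gamma^2(D_X)_{AB}$ from the preceding proposition on the normal connection.

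First I would multiply by $\gamma^2$ and expand $\gamma^2(\nabla_X W_A)(Y)$. Writing $W_A(Y)=\gamma^{-2}\B_A(Y)$ and $W_A(\nabla_X Y)=\gamma^{-2}\B_A(\nabla_X Y)$, the Leibniz rule on the scalar factor $\gamma^{-2}$ generates the term $-\gamma^{-2}(\nabla_X\gamma^2)\B_A(Y)$ together with the combination $\nabla_X(\B_A(Y))-\B_A(\nabla_X Y)$. To convert this combination into $(\nablab_X\B_A)(Y)$, I would apply Gauss's formula (\ref{eq:GaussFormulah}) twice: once to rewrite $\nabla_X(\B_A(Y))$ as the tangential part of $\nablab_X(\B_A(Y))$, which is legitimate because $\B_A(Y)\in\TSigma$, and once to expand $\B_A(\nablab_X Y)=\B_A(\nabla_X Y)+\sum_C h_C(X,Y)\B_A(N_C)$. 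Assembling, $\gamma^2(\nabla_X W_A)(Y)$ appears as $(\nablab_X\B_A)(Y)$ minus $\gamma^{-2}(\nabla_X\gamma^2)\B_A(Y)$, plus the symmetric term $\sum_C h_C(X,Y)\B_A(N_C)$ and a purely normal contribution $-\alpha(X,\B_A(Y))$.

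Next I would antisymmetrize in $X\leftrightarrow Y$: the symmetric term $h_C(X,Y)\B_A(N_C)$ drops out, and the normal-valued $\alpha$-corrections, which lie in $\TSigma^\perp$, do not contribute to the tangential identity (the LHS $\gamma^2\W_A(X,Y)$ lies in $\TSigma$), leaving exactly the first two lines of the statement. For the $D$-sum, antisymmetry of the metric connection on the normal bundle gives $(D_X)_{AB}=-(D_X)_{BA}$, and combining with $\gb(\B_B(N_A),X)=\gamma^2(D_X)_{AB}$ yields $\gb(D_X N_B,N_A)=\gamma^{-2}\gb(\B_A(N_B),X)$. Together with $W_B(Y)=\gamma^{-2}\B_B(Y)$ and the outer factor $\gamma^2$, this produces the last line of the statement.

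The main obstacle is the careful bookkeeping of Gauss's formula and of the antisymmetrization, in particular verifying that the symmetric $h_C$-term and the normal $\alpha$-correction arising in the tangential/ambient conversion both drop out; once that is established the rest is straightforward substitution.
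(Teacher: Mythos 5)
The paper actually states this proposition without proof (it follows the remark that $W_A$ and $D_X$ are both expressible through $\B_A$), so there is nothing to compare against; your direct-substitution route is clearly the intended one, and your bookkeeping is essentially correct: the Leibniz rule on $\gamma^{-2}$ produces the second line, the symmetric term $\sum_C h_C(X,Y)\B_A(N_C)$ cancels under antisymmetrization, and the identification $\gb(D_XN_B,N_A)=\gamma^{-2}\gb(\B_A(N_B),X)$ gives the third line.

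The one step that is not airtight is the disposal of the $\alpha$-corrections. After antisymmetrizing, the stated right-hand side still carries the normal-valued piece
\begin{align*}
\alpha\paraa{X,\B_A(Y)}-\alpha\paraa{Y,\B_A(X)}
=\gamma^2\sum_{B=1}^p\gb\paraa{(W_AW_B-W_BW_A)X,Y}N_B,
\end{align*}
which vanishes for hypersurfaces but not in codimension $p\geq 2$ unless the Weingarten maps commute (its failure to vanish is exactly what enters the Ricci equation for the normal curvature). Since the left-hand side $\gamma^2\W_A(X,Y)$ is tangential, you cannot argue that a nonzero normal component of the right-hand side ``does not contribute'': if it survives, the displayed equality of vectors in $TM$ is simply false as written. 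What your computation actually establishes is that the tangential parts of the two sides agree, together with the verbatim identity in the hypersurface case. That tangential reading is all that is ever used downstream --- equation (\ref{eq:CMPgamma}) composes everything with $\P^2$ --- but you should say so explicitly (e.g.\ insert the projection $\gamma^{-2}\P^2$ in front of $(\nablab_X\B_A)(Y)-(\nablab_Y\B_A)(X)$, or record the commutator term), rather than fold the issue into the phrase ``tangential identity''.
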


\noindent As the aim is to express the Codazzi-Mainardi equations in
terms of Nambu brackets, we will introduce maps $\C_A$ that
is defined in terms of $\W_A$ and can be written as expressions
involving Nambu brackets.

\begin{definition}
  The maps $\C_A:C^\infty(\Sigma)\times\cdots\times
  C^\infty(\Sigma)\to \TSigma$ are defined as
  \begin{align}
    \C_A(f_1,\ldots,f_{n-2}) = \frac{1}{2\rho}\eps^{aba_1\cdots a_{n-2}}
    \W_A(e_a,e_b)\paraa{\d_{a_1}f_1}\cdots\paraa{\d_{a_{n-2}}f_{n-2}}
  \end{align}
  for $A=1,\ldots,p$ and $n\geq 3$. When $n=2$, $\C_A$ is defined as
  \begin{align*}
    \C_A = \frac{1}{2\rho}\eps^{ab}\W_A(e_a,e_b).
  \end{align*}
\end{definition}

\begin{proposition}\label{prop:CANPbracket}
  Let $\{g_1,g_2\}_f\equiv\{g_1,g_2,f_1,\ldots,f_{n-2}\}$. Then
  \begin{align*}
    \C_A(f_1,\ldots,&f_{n-2})^i = 
    \pb{\gamma^{-2}(\B_A)^i_k,x^k}_f
    +\frac{1}{\gamma^2}\pb{x^j,x^l}_f\bracketb{\Gammab^i_{jk}(\B_A)^k_l-(\B_A)^i_k\Gammab^k_{jl}}\\
    &-\frac{1}{\gamma^2}\sum_{B=1}^p\bracketb{
      \pb{n_A^k,x^l}_f(\B_B)^i_l+\Gammab^k_{lj}\pb{x^l,x^m}_fn_A^j(\B_B)^i_m
    }(n_B)_k.
  \end{align*}
\end{proposition}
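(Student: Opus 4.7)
The plan is to specialise the formula for $\gamma^2\W_A(X,Y)$ given in the preceding proposition to $X=e_a$, $Y=e_b$ and then contract against $\frac{1}{2\rho}\eps^{aba_1\cdots a_{n-2}}\paraa{\d_{a_1}f_1}\cdots\paraa{\d_{a_{n-2}}f_{n-2}}$. Every summand in that formula is manifestly antisymmetric under $X\leftrightarrow Y$, so the two halves of each antisymmetric pair combine and the factor $\tfrac{1}{2}$ in the definition of $\C_A$ is absorbed; it therefore suffices to keep one representative of each pair. The reduction then has two distinct ingredients: the combined $\nablab\B_A$- and $\nabla\gamma^2$-terms, and the sum over $B$.

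For the first, I would use the Leibniz rule to rewrite $\gamma^{-2}(\nablab_X\B_A)(Y)-\gamma^{-4}(\nabla_X\gamma^2)\B_A(Y)$ as $\nablab_X\bracketa{\gamma^{-2}\B_A(Y)}-\gamma^{-2}\B_A(\nablab_X Y)$. At $X=e_a$, $Y=e_b$ the components $(\nablab_{e_a}e_b)^i=\d_a\d_b x^i+(\d_a x^p)(\d_b x^l)\Gammab^i_{pl}$ are symmetric in $a,b$, so the $\B_A(\nablab_{e_a}e_b)$-piece drops under $\eps^{ab\av}$. Expanding
\begin{align*}
\nablab_{e_a}\bracketa{\gamma^{-2}\B_A(e_b)}^i=\d_a\bracketa{\gamma^{-2}(\B_A)^i_l(\d_b x^l)}+\gamma^{-2}(\d_a x^p)\Gammab^i_{pj}(\B_A)^j_l(\d_b x^l),
\end{align*}
the subterm proportional to $\d_a\d_b x^l$ likewise drops, and reading off the remaining index sums as Nambu brackets gives $\pb{\gamma^{-2}(\B_A)^i_k,x^k}_f+\gamma^{-2}\Gammab^i_{jk}(\B_A)^k_l\pb{x^j,x^l}_f$. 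The extra term $-\gamma^{-2}(\B_A)^i_k\Gammab^k_{jl}\pb{x^j,x^l}_f$ appearing in the target vanishes identically (symmetric $\Gammab^k_{jl}$ contracted with an antisymmetric Nambu bracket in $j,l$) and may be inserted at no cost; its role is cosmetic, making the first line of the claim equal to the covariant derivative of the $(1,1)$-tensor $\gamma^{-2}\B_A$.

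For the $\sum_B$-contribution I would rewrite $\gb\paraa{\B_A(N_B),X}$ using the earlier identity $\gb\paraa{\B_B(N_A),X}=\gamma^2(D_X)_{AB}$ together with $(D_X)_{BA}=\gb(\nablab_X N_B,N_A)=-\gb(N_B,\nablab_X N_A)$, where the last equality is the metric property of $D$. Evaluating at $X=e_a$ yields $\gb\paraa{\B_A(N_B),e_a}=-\gamma^2(n_B)_k\bracketa{\d_a n_A^k+(\d_a x^l)\Gammab^k_{lj}n_A^j}$. Feeding this, along with $\B_B(e_b)^i=(\B_B)^i_m(\d_b x^m)$ and the overall $\gamma^{-4}$ prefactor from the $\W_A$-formula, into the contraction with $\eps^{ab\av}/\rho$ and the $f$-derivatives produces exactly the two Nambu-bracket terms of the claim with $(n_B)_k$ outside and $n_A^j$ inside, carrying the uniform coefficient $-\gamma^{-2}$.

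The main technical point is the second step above: one must carefully distinguish the covariant derivative of the \emph{vector} $\gamma^{-2}\B_A(e_b)$ from that of the \emph{tensor} $\gamma^{-2}\B_A$, and verify that every piece symmetric in $a,b$ is killed by $\eps^{ab\av}$. Once this symmetric-versus-antisymmetric accounting is in hand, the remainder is routine index bookkeeping and sign tracking through the antisymmetry of the Nambu bracket.
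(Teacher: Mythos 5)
The paper states Proposition \ref{prop:CANPbracket} without proof, so there is no argument of the authors' to compare yours against; judged on its own, your computation is correct and is surely the intended one: specialize the $\W_A$-formula to $X=e_a$, $Y=e_b$, contract with $\tfrac{1}{2\rho}\eps^{ab\av}(\d_{a_1}f_1)\cdots(\d_{a_{n-2}}f_{n-2})$, and read off Nambu brackets. You correctly identify the three points where something could go wrong and handle each: the Leibniz recombination of the $\nablab_X\B_A$- and $\nabla_X\gamma^2$-terms into $\nablab_{e_a}\bracketa{\gamma^{-2}\B_A(e_b)}-\gamma^{-2}\B_A(\nablab_{e_a}e_b)$, the elimination of every $a\leftrightarrow b$-symmetric piece (both $\d_a\d_bx^i$ and the $\Gammab$-contribution to $\nablab_{e_a}e_b$) by $\eps^{ab\av}$, and the observation that the term $(\B_A)^i_k\Gammab^k_{jl}\pb{x^j,x^l}_f$ in the stated formula vanishes identically and is only there to exhibit the first line as the covariant derivative of the tensor $\gamma^{-2}\B_A$. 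Your treatment of the $\sum_B$-term via $\gb(\B_A(N_B),e_a)=-\gamma^2(n_B)_k\bracketa{\d_an_A^k+(\d_ax^l)\Gammab^k_{lj}n_A^j}$ (equivalently, directly from equation (\ref{eq:BAX})) reproduces the last line with the correct sign and coefficient. I see no gap.
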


\begin{remark}
  \noindent In case $\Sigma$ is a hypersurface, the expression for $\C\equiv \C_1$ simplifies to
  \begin{align*}
    \C(f_1,\ldots,f_{n-2})^i = 
    &\pb{\gamma^{-2}\B^i_k,x^k}_f
    +\frac{1}{\gamma^2}\pb{x^j,x^l}_f\bracketb{\Gammab^i_{jk}\B^k_l-\B^i_k\Gammab^k_{jl}},
  \end{align*}
  since $D_XN=0$.  
\end{remark}

\noindent It follows from Proposition \ref{prop:CMWPi} that we can
reformulate the Codazzi-Mainardi equations in terms of $\C_A$:

\begin{theorem}\label{thm:CMNambu}
  For all $f_1,\ldots,f_{n-2}\in C^\infty(\Sigma)$ it holds that
  \begin{align}\label{eq:CMNambu}
    \gamma^2\C_A(f_1,\ldots,f_{n-2}) = (\P^2)^{i}_j\bracketb{\{x^k,\Gammab^j_{kj'}\}_f-\pb{x^k,x^l}_f\Gammab^m_{lj'}\Gammab^j_{km}}n_A^{j'}\d_i,
  \end{align}
  for $A=1,\ldots,p$, where $\{g_1,g_2\}_f=\{g_1,g_2,f_1,\ldots,f_{n-2}\}$.
\end{theorem}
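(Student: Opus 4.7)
The plan is to reduce the identity to the reformulation of the Codazzi--Mainardi equations given in Proposition~\ref{prop:CMWPi}, namely equation (\ref{eq:CMPgamma}),
\[
\gamma^2\W_A(X,Y) = -\P^2\paraa{\Rb(X,Y)N_A},
\]
and then perform a coordinate expansion of $\Rb$. First I would set $X=e_a$, $Y=e_b$ and contract both sides of (\ref{eq:CMPgamma}) with $\frac{1}{2\rho}\eps^{aba_1\cdots a_{n-2}}(\d_{a_1}f_1)\cdots(\d_{a_{n-2}}f_{n-2})$; by the definition of $\C_A$, the left-hand side becomes precisely $\gamma^2\C_A(f_1,\ldots,f_{n-2})$.

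For the right-hand side, writing $e_a=(\d_a x^i)\d_i$ and using the standard local-coordinate formula for the curvature of $M$ gives
\[
\paraa{\Rb(e_a,e_b)N_A}^l = (\d_a x^i)(\d_b x^j) n_A^m\paraa{\d_i\Gammab^l_{jm}-\d_j\Gammab^l_{im}+\Gammab^l_{in}\Gammab^n_{jm}-\Gammab^l_{jn}\Gammab^n_{im}}.
\]
The key recognition is that
\[
\frac{1}{\rho}\eps^{aba_1\cdots a_{n-2}}(\d_a x^i)(\d_b x^j)(\d_{a_1}f_1)\cdots(\d_{a_{n-2}}f_{n-2})=\{x^i,x^j\}_f
\]
is antisymmetric in $i,j$; the two derivative terms of $\Rb$ then combine into $2\{x^i,x^j\}_f\,\d_i\Gammab^l_{jm}$, and the two quadratic terms into $2\{x^i,x^j\}_f\Gammab^l_{in}\Gammab^n_{jm}$, neutralizing the front factor $\frac{1}{2}$.

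The final ingredient is the chain-rule identity
\[
\{x^i,\Gammab^l_{im}\}_f = \{x^i,x^j\}_f\paraa{\d_j\Gammab^l_{im}},
\]
which follows from $\d_b[\Gammab^l_{im}(x(u))]=(\d_b x^j)(\d_j\Gammab^l_{im})$ inserted into the definition of the Nambu bracket. Combined with antisymmetry of $\{x^i,x^j\}_f$ (relabeling $i\leftrightarrow j$), this turns $\{x^i,x^j\}_f\,\d_i\Gammab^l_{jm}$ into $-\{x^i,\Gammab^l_{im}\}_f$, producing the $\{x^k,\Gammab^j_{kj'}\}_f$ term of the theorem, while the quadratic piece directly gives $-\{x^k,x^l\}_f\Gammab^m_{lj'}\Gammab^j_{km}$ after index relabeling.

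The main obstacle is purely combinatorial: propagating the overall sign from (\ref{eq:CMPgamma}), keeping track of the $\frac{1}{2}$ and antisymmetrization factors, and confirming that the two natural chain-rule rewrites of the derivative part of $\Rb$ assemble into exactly the single Nambu bracket $\{x^k,\Gammab^j_{kj'}\}_f$ with the correct sign. No deeper geometric input is needed beyond (\ref{eq:CMPgamma}); the rest is a formal manipulation based on Leibniz/chain-rule properties of the Nambu bracket.
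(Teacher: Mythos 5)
Your proposal is correct and follows essentially the same route as the paper: both reduce the claim to the reformulated Codazzi--Mainardi equation $\gamma^2\W_A(X,Y)=-\P^2\paraa{\Rb(X,Y)N_A}$, contract with $\frac{1}{2\rho}\eps^{aba_1\cdots a_{n-2}}(\d_{a_1}f_1)\cdots(\d_{a_{n-2}}f_{n-2})$, and then expand the curvature of $M$ in local coordinates. The coordinate computation you spell out (antisymmetry of $\{x^i,x^j\}_f$ combining the four curvature terms into two, plus the chain-rule identity producing $\{x^k,\Gammab^j_{kj'}\}_f$) is exactly the step the paper declares ``straightforward'', and your signs and index placements check out against the stated right-hand side.
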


\begin{proof}
  As noted previously, one can write the Codazzi-Mainardi equations as
  \begin{align*}
    \gamma^2\W_A(X,Y) = -\P^2\paraa{\Rb(X,Y)N_A}.
  \end{align*}
  That the above equation holds for all $X,Y\in\TSigma$ is equivalent to saying that
  \begin{align*}
    \gamma^2\frac{1}{2\rho}\eps^{aba_1\cdots a_{n-2}}\W_A(e_a,e_b)
    =-\frac{1}{2\rho}\eps^{aba_1\cdots a_{n-2}}\P^2\paraa{\Rb(e_a,e_b)N_A}
  \end{align*}
  for all values of $a_1,\ldots,a_{n-2}\in\{1,\ldots,n\}$; furthermore, this is equivalent to
  \begin{align*}
    \gamma^2\C_A(f_1,\ldots,f_{n-2}) = 
    -\frac{1}{2\rho}\eps^{aba_1\cdots a_{n-2}}\P^2\paraa{\Rb(e_a,e_b)N_A}
    (\d_{a_1}f_1)\cdots(\d_{a_{n-2}}f_{n-2})
  \end{align*}
  for all $f_1,\ldots,f_{n-2}\in C^\infty(\Sigma)$. It is now straightforward to show that
\begin{align*}
    -\frac{1}{2\rho}\eps^{aba_1\cdots a_{n-1}}&\paraa{\Rb(e_a,e_b)N_A}^i(\d_{a_1}f_1)\cdots(\d_{a_{n-2}}f_{n-2})\\
    &=\parab{\{x^k,\Gammab^i_{kj}\}_f-\pb{x^k,x^l}_f\Gammab^m_{lj}\Gammab^i_{km}}n_A^{j},
  \end{align*}
  which proves the statement.
\end{proof}

\noindent If $M$ is a space of constant curvature (in which case $\gb(\Rb(X,Y)Z,N_A)=0$), then Theorem \ref{thm:CMNambu} states that
\begin{align}
  \C_A(f_1,\ldots,f_{n-2}) = 0
\end{align}
for all $f_1,\ldots,f_{n-2}\in C^\infty(\Sigma)$. Furthermore, if $M=\reals^m$, then (\ref{eq:CMNambu}) becomes
\begin{align}
  \gamma^2\pb{\gamma^{-2}(\B_A)^i_k,x^k}_f-\sum_{B=1}^p\bracketb{
    \pb{n_A^k,x^l}_f(\B_B)^i_l}(n_B)_k = 0.
\end{align}

\section{Embedded surfaces}\label{sec:surfaces}

\noindent Let us now turn to the special case when $\Sigma$ is a
surface. For surfaces, the tensors $\P$, $\S_A$ and $\T_A$ are themselves
maps from $TM$ to $TM$, and $\S_A$ coincides with
$\T_A$. Moreover, since the second fundamental forms can be considered
as $2\times 2$ matrices, one has the identity
\begin{align*}
  2\det W_A = \paraa{\tr W_A}^2-\tr W_A^2,
\end{align*}
which implies that the scalar curvature can be written as
\begin{align*}
  R &= g^{ac}g^{bd}\gb\paraa{\Rb(e_c,e_d)e_b,e_a}
  + 2\sum_{A=1}^p\det W_A\\
  &= 2\frac{\gb\paraa{\Rb(e_1,e_2)e_2,e_1}}{g}+ 2\sum_{A=1}^p\det W_A.
\end{align*}
Thus, defining the Gaussian curvature $K$ to be one half of the above
expression (which also coincides with the sectional curvature), one obtains
\begin{align}
  K = \frac{\gb\paraa{\Rb(e_1,e_2)e_2,e_1}}{g}-\frac{1}{2\gamma^2}\sum_{A=1}^p\Tr\S_A^2,
\end{align}
which in the case when $M=\reals^m$ becomes
\begin{align}
  K =-\frac{1}{2\gamma^2}\sum_{A=1}^p\sum_{i,j=1}^m\{x^i,n_A^j\}\{x^j,n_A^i\},
\end{align}
and by using the normal vectors $Z_\alpha$ the expression for $K$ can
be written as
\begin{align}
  K &= -\frac{1}{8\gamma^4(p-1)!}
  \sum\eps_{jklI}\eps_{imnI}\{x^i,\{x^k,x^l\}\}\{x^j,\{x^m,x^n\}\}.
\end{align}

\noindent To every Riemannian metric on $\Sigma$ one can associate an
almost complex structure $\J$ through the formula
\begin{equation*}
  \J(X) = \frac{1}{\sqrt{g}}\eps^{ac}g_{cb}X^be_a,
\end{equation*}
and since on a two dimensional manifold any almost complex structure
is integrable, $\J$ is a complex structure on $\Sigma$. For $X\in TM$ one has
\begin{align}
  \P(X) = -\frac{1}{\gamma\sqrt{g}}\gb\paraa{X,e_a}\eps^{ab}e_b,
\end{align}
and it follows that one can express the complex structure in terms of $\P$.

\begin{theorem}\label{thm:complexstructure}
  Defining $\JM(X)=\gamma\P(X)$ for all $X\in TM$ it holds that
  $\J_M(Y)=\J(Y)$ for all $Y\in\TSigma$. That is, $\gamma\P$ defines a
  complex structure on $\TSigma$.
\end{theorem}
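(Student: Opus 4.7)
The plan is to reduce the theorem to a short index-level calculation, using the explicit formula for $\P(X)$ as a map $TM \to TM$ recorded just above the statement. Since both $\J$ on $\TSigma$ and $\JM = \gamma\P$ on $TM$ are given by closed-form expressions, it suffices to substitute an arbitrary tangent vector $Y = Y^c e_c$ into the formula for $\gamma\P$ and verify that the result coincides with $\J(Y)$.

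First I would compute $\gb(Y,e_a) = g_{ac} Y^c$ from $g_{ab}=\gb(e_a,e_b)$, and insert it into $\P(Y) = -\frac{1}{\gamma\sqrt{g}}\gb(Y,e_a)\eps^{ab}e_b$. Multiplying by $\gamma$ produces an expression of the form $-\frac{1}{\sqrt{g}}\eps^{ab}g_{ac}Y^c e_b$, which I would then compare directly to $\J(Y) = \frac{1}{\sqrt{g}}\eps^{ac}g_{cb}Y^b e_a$. A relabeling of dummy indices together with one use of $\eps^{ba} = -\eps^{ab}$ identifies the two, establishing $\JM(Y) = \J(Y)$ on $\TSigma$.

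There is essentially no conceptual obstacle---the argument is pure index bookkeeping. The one ingredient that is not purely formal is the formula for $\P(X)$ itself, which must be derived from the definition $\P^{ij} = \{x^i,x^j\}$ by expanding the Nambu bracket and using $(\d_b x^j)\gb_{jk}X^k = \gb(e_b,X)$; this is a short exercise analogous to the proof of Proposition \ref{prop:PBSTproperties}. Once $\JM$ restricted to $\TSigma$ is shown to equal $\J$, the concluding assertion that $\gamma\P$ defines a complex structure on $\TSigma$ is automatic, since $\J$ is the standard almost complex structure induced by an orientation and Riemannian metric on a two-dimensional manifold, which squares to $-\mid$ and is integrable in real dimension two.
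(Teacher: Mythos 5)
Your overall strategy coincides with the paper's: the paper gives no explicit proof of Theorem \ref{thm:complexstructure}, but simply displays the closed formula for $\P(X)$ and asserts that the identification with $\J$ follows, and your plan of substituting $Y=Y^ce_c$, using $\gb(Y,e_a)=g_{ac}Y^c$ and a single antisymmetry $\eps^{ba}=-\eps^{ab}$, is exactly that verification. Conditional on the displayed formula $\P(X)=-\tfrac{1}{\gamma\sqrt g}\gb(X,e_a)\eps^{ab}e_b$, your index bookkeeping is correct and does yield $\gamma\P(Y)=\J(Y)$.

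The trouble sits in the step you defer as ``a short exercise analogous to the proof of Proposition \ref{prop:PBSTproperties}'', namely deriving that displayed formula from $\P^{ij}=\{x^i,x^j\}$. Carrying it out, one finds
\begin{align*}
\P(X)^i=\P^{ij}\gb_{jk}X^k=\frac{1}{\rho}\eps^{ab}\paraa{\d_ax^i}\paraa{\d_bx^j}\gb_{jk}X^k
=\frac{1}{\rho}\eps^{ab}\gb(X,e_b)\paraa{\d_ax^i},
\end{align*}
that is $\P(X)=-\frac{1}{\rho}\gb(X,e_a)\eps^{ab}e_b=-\frac{\gamma}{\sqrt g}\gb(X,e_a)\eps^{ab}e_b$, since $1/\rho=\gamma/\sqrt g$. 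This prefactor differs from the paper's $1/(\gamma\sqrt g)$ by $\gamma^2$, and with it your substitution gives $\gamma\P(Y)=\gamma^2\J(Y)$ rather than $\J(Y)$; the map that restricts to $\J$ is $\gamma^{-1}\P$. Proposition \ref{prop:PBSTproperties} settles the normalization independently: because $\P^{ij}$ is antisymmetric, the composition $\P\circ\P$ equals $-\P^2$ in the paper's notation, hence $\P\circ\P=-\gamma^2\operatorname{id}$ on $\TSigma$, and only $\gamma^{-1}\P$ can square to $-\operatorname{id}$ there. So either one normalizes $\rho=\sqrt g$ (making $\gamma=1$, whereupon the issue evaporates), or the statement should read $\JM=\gamma^{-1}\P$; as things stand, the exercise you appeal to cannot produce the formula your verification relies on, and you should flag this rather than treat it as routine. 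Your closing observation --- that once the restriction equals $\J$ the complex-structure claim is automatic, since $\J$ is the standard integrable almost complex structure of an oriented Riemannian surface --- is correct and is indeed all that remains.
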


\noindent Let us now turn to the Codazzi-Mainardi equations for
surfaces. In this case, the map $\C_A$ becomes a tangent vector and 
one can easily see in Proposition \ref{prop:CANPbracket} that the sum
in the expression for $\C_A$ can be written in a slightly more compact form, namely
\begin{align*}
  \C_A = &\pb{\gamma^{-2}(\B_A)^i_k,x^k}\d_i
  +\frac{1}{\gamma^2}\pb{x^j,x^l}\bracketb{\Gammab^i_{jk}(\B_A)^k_l-(\B_A)^i_k\Gammab^k_{jl}}\\
  &\qquad+\frac{1}{\gamma^2}\sum_{B=1}^p\B_B\S_A(N_B).
\end{align*}
\noindent Thus, for surfaces embedded in $\reals^m$ the Codazzi-Mainardi equations become
\begin{align*}
  \sum_{j,k=1}^m\pb{\gamma^{-2}\{x^i,x^j\}\{x^j,n_A^k\},x^k}\d_i+\frac{1}{\gamma^2}\sum_{B=1}^p\B_B\S_A(N_B)=0,
\end{align*}
and in $\reals^3$ one has
\begin{align}
  \sum_{j,k=1}^3\pba{\gamma^{-2}\{x^i,x^j\}\{x^j,n^k\},x^k} = 0.
\end{align}

\noindent Let us note that one can rewrite these equations using the following result:

\begin{proposition}
  For $M=\reals^m$ and $i=1,\ldots,m$ it holds that
  \begin{align}
    \sum_{j,k=1}^m\pba{f\{x^i,x^j\}\{x^j,n^k\},x^k} = 
    \sum_{j,k=1}^m\pba{f\{x^i,x^j\}\{x^j,x^k\},n^k}
  \end{align}
  for any normal vector $N=n^i\d_i$ and any $f\in C^\infty(\Sigma)$.
\end{proposition}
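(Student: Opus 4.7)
The key algebraic input is the normality condition: since $N=n^i\d_i$ is normal to $\TSigma$ and $M=\reals^m$ is flat, $\sum_k n^k(\d_a x^k)=0$ for every $a$, which translates into the Poisson-algebraic identity
\begin{align*}
  \sum_k n^k\{g,x^k\} = 0 \qquad\text{for every } g\in C^\infty(\Sigma).
\end{align*}
Taking $\{\,\cdot\,,h\}$ of this relation furnishes the derived identity
\begin{align*}
  \sum_k\{g,x^k\}\{h,n^k\} = \sum_k\{\{g,x^k\},h\}n^k,
\end{align*}
and both will be used repeatedly. The plan is to expand the difference between the two sides of the proposition by the Leibniz rule and show each resulting piece vanishes.

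A single Leibniz step on the outer bracket gives
\begin{align*}
  \text{LHS}-\text{RHS} &= \sum_{j,k}\bracketa{\{f\{x^i,x^j\},x^k\}\{x^j,n^k\} - \{f\{x^i,x^j\},n^k\}\{x^j,x^k\}}\\
  &\qquad + f\sum_{j,k}\{x^i,x^j\}\bracketa{\{\{x^j,n^k\},x^k\} - \{\{x^j,x^k\},n^k\}}.
\end{align*}
For the second line, the Jacobi identity yields $\{\{x^j,n^k\},x^k\} - \{\{x^j,x^k\},n^k\} = -\{\{n^k,x^k\},x^j\}$, so the line collapses to $-f\sum_j\{x^i,x^j\}\{\phi,x^j\}$ with $\phi = \sum_k\{n^k,x^k\}$. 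In coordinates $\phi = \rho^{-1}\eps^{ab}\sum_k(\d_a n^k)(\d_b x^k)$, and the differentiated normality condition $\sum_k(\d_a n^k)(\d_b x^k) = -\sum_k n^k\d_a\d_b x^k$ is symmetric in $a,b$, so $\eps^{ab}$ annihilates it and $\phi = 0$.

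For the first line I apply Leibniz once more, splitting it into a piece where the outer bracket falls on $f$ and a piece where it falls on $\{x^i,x^j\}$. The $f$-piece, after extracting common factors, contains the $k$-sum $\sum_k[(\d_q x^k)(\d_s n^k) - (\d_q n^k)(\d_s x^k)]$, which vanishes because both terms equal $-\sum_k n^k\d_q\d_s x^k$ by the same mechanism. The remaining piece I handle by applying the derived identity twice, with $(g,h)=(\{x^i,x^j\},x^j)$ and $(g,h)=(x^j,\{x^i,x^j\})$, to rewrite both summands as sums of $n^k$ against a double bracket. The Jacobi identity with $A=\{x^i,x^j\}$, $B=x^k$, $C=x^j$ then collapses the difference of the two double brackets to $-\{\{x^j,\{x^i,x^j\}\},x^k\}$, and one final application of $\sum_k n^k\{G,x^k\}=0$ with $G=\{x^j,\{x^i,x^j\}\}$ finishes the proof.

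The main obstacle is the bookkeeping: one has to route three distinct uses of the normality relation (its bare form, its differentiated form, and its bracketed derived form) together with two uses of Jacobi so that every surviving term acquires the structure $\sum_k n^k\{G,x^k\}$. Once this structural pattern is recognised, the reduction is essentially forced and the proposition follows with no further geometric input.
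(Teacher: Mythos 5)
Your proof is correct and rests on exactly the same ingredients as the paper's: the normality identity $\sum_k n^k\{g,x^k\}=0$, its bracketed consequence $\sum_k\{g,x^k\}\{h,n^k\}=\sum_k n^k\{\{g,x^k\},h\}$, the vanishing of $\sum_k\{x^k,n^k\}$ via the symmetry of $\sum_k n^k\partial_a\partial_b x^k$, and the Jacobi identity. The only difference is organizational: the paper transforms the left-hand side into the right-hand side in a single chain of rewritings, treating $f\{x^i,x^j\}$ as one indivisible factor throughout, whereas you expand the difference of the two sides fully by Leibniz, which costs you one extra (but valid) coordinate verification for the piece in which the derivative lands on $f$.
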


\begin{proof}
  We start by recalling that for any $g\in C^\infty(\Sigma)$ it holds
  that $\sum_{i=1}^m\{g,x^i\}n^i=0$, since it involves the scalar product
  $\gb(e_a,N)$. Moreover, one also has
  \begin{align*}
    \sum_{k=1}^m\{x^k,n^k\} &= \sum_{k=1}^m\frac{1}{\rho}\eps^{ab}(\d_ax^k)(\d_bn^k)
    =\sum_{k=1}^m\frac{1}{\rho}\eps^{ab}\parab{\d_b\paraa{n^k\d_ax^k}-n^k\d^2_{ab}x^k}\\
    &=-\sum_{k=1}^m\frac{1}{\rho}\eps^{ab}n^k\d^2_{ab}x^k=0,
  \end{align*}
  which implies that $\sum_{k=1}^m\{x^k,gn^k\}=0$ for all $g\in C^\infty(\Sigma)$.
  By using the above identities together with the Jacobi identity, one
  obtains
  \begin{align*}
    \pba{f\{x^i,x^j\}\{x^j,n^k\},x^k} &= 
    f\{x^i,x^j\}\pba{\{x^j,n^k\},x^k}+\{x^j,n^k\}\pba{f\{x^i,x^j\},x^k}\\
    &= -f\{x^i,x^j\}\pba{\{x^k,x^j\},n^k}-n^k\pba{x^j,\{f\{x^i,x^j\},x^k\}}\\
    &= -f\{x^i,x^j\}\pba{\{x^k,x^j\},n^k} + n^k\pba{f\{x^i,x^j\},\{x^k,x^j\}}\\
    &= -f\{x^i,x^j\}\pba{\{x^k,x^j\},n^k} - \{x^k,x^j\}\pba{f\{x^i,x^j\},n^k}\\
    &= \pba{f\{x^i,x^j\}\{x^j,x^k\},n^k}.\qedhere
  \end{align*}
\end{proof}

\noindent Hence, one can rewrite the Codazzi-Mainardi equations for a surface in $\reals^3$ as
\begin{align}\label{eq:CMR3P}
  \sum_{j,k=1}^3\pba{\gamma^{-2}(\P^2)^{ik},n^k} = 0,
\end{align}
and it is straight-forward to show that 
\begin{align*}
  \sum_{i,j,k=1}^3\paraa{\d_cx^i}\pba{\gamma^{-2}(\P^2)^{ik},n^k} = \frac{1}{\rho}\eps^{ab}\nabla_ah_{bc},
\end{align*}
thus reproducing the classical form of the Codazzi-Mainardi equations.

Is it possible to verify (\ref{eq:CMR3P}) directly using only Poisson
algebraic manipulations? It turns out that that the Codazzi-Mainardi
equations in $\reals^3$ is an identity for arbitrary Poisson algebras,
if one assumes that a normal vector is given by
$\frac{1}{2\gamma}\eps_{ijk}\{x^j,x^k\}\d_i$.
\begin{proposition}
  Let $\{\cdot,\cdot\}$ be an arbitrary Poisson structure on
  $C^\infty(\Sigma)$. Given $x^1,x^2,x^3\in C^\infty(\Sigma)$
  it holds that
  \begin{align*}
    \sum_{j,k,l,n=1}^3\frac{1}{2}\eps_{kln}\pba{\gamma^{-2}\{x^i,x^j\}\{x^j,x^k\},\gamma^{-1}\{x^l,x^n\}}=0
  \end{align*}
  for $i=1,2,3$, where
  \begin{align*}
    \gamma^2 = \{x^1,x^2\}^2+\{x^2,x^3\}^2+\{x^3,x^1\}^2.
  \end{align*}
\end{proposition}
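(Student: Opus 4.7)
The plan is to reduce the identity to two purely algebraic ingredients: the classical three-dimensional $\eps\eps$-contraction, and the fact that $\sum_k (n^k)^2 = 1$ follows directly from the definition of $\gamma^2$. Throughout, set $v^k := \tfrac{1}{2}\eps_{kln}\{x^l,x^n\}$, so that $\gamma^{-1}v^k$ plays the role of the $k$th component of the unit normal, which we continue to denote by $n^k$. The stated identity is then equivalent to
\begin{align*}
  E^i := \sum_{j,k=1}^3 \pba{\gamma^{-2}\{x^i,x^j\}\{x^j,x^k\},\,n^k} = 0, \qquad i=1,2,3.
\end{align*}

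The first and main step is to prove the algebraic identity
\begin{align*}
  h^{ik} := \sum_{j=1}^3 \{x^i,x^j\}\{x^j,x^k\} = v^i v^k - \gamma^2 \delta^{ik},
\end{align*}
valid for any Poisson bracket on $C^\infty(\Sigma)$ and any $x^1,x^2,x^3$. To see this, one writes $v^i v^k = \tfrac{1}{4}\eps_{ilm}\eps_{kpq}\{x^l,x^m\}\{x^p,x^q\}$ and expands $\eps_{ilm}\eps_{kpq}$ as the determinant of the corresponding $3\times 3$ matrix of Kronecker deltas. The antisymmetry of the Poisson bracket in its arguments, together with the defining equation $2\gamma^2 = \sum_{l,m}\{x^l,x^m\}^2$, collapses the six resulting contractions into $\gamma^2\delta^{ik} + h^{ik}$, giving the stated formula.

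Substituting this identity into $E^i$ and using $\gamma^{-2}v^iv^k = n^in^k$ and $\{-1,n^i\}=0$ yields
\begin{align*}
  E^i = \sum_k\pba{n^i n^k - \delta^{ik},\,n^k} = \sum_k\pba{n^i n^k,\,n^k}.
\end{align*}
Two applications of the Leibniz rule, together with the antisymmetry $\{n^k,n^k\}=0$, then produce
\begin{align*}
  \sum_k \pba{n^i n^k,\,n^k} = \sum_k n^k \pb{n^i,n^k} = \tfrac{1}{2}\pba{n^i,\,\textstyle\sum_k (n^k)^2},
\end{align*}
and $\sum_k (n^k)^2 = \gamma^{-2}\sum_k (v^k)^2 = 1$ is immediate from the definition of $\gamma^2$. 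Hence $E^i = \tfrac{1}{2}\{n^i,1\}=0$.

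Notably, the Jacobi identity is never invoked in this argument: it rests only on the Leibniz rule, the antisymmetry of the bracket, and a tensorial identity in $\reals^3$. The main obstacle is therefore the bookkeeping in the $\eps\eps$-contraction that produces $h^{ik} = v^iv^k - \gamma^2\delta^{ik}$, which is the only step requiring careful index manipulation; once that identity is in hand, the remainder is a two-line Leibniz computation.
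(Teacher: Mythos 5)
Your proof is correct, and although it ultimately rests on the same two pillars as the paper's argument --- the Leibniz rule together with the normalization $\sum_k (n^k)^2=1$, with the Jacobi identity never being needed --- it packages them in a genuinely different way. The paper fixes a cyclic permutation $(u,v,w)$ of $(1,2,3)$, expands $\mathrm{CM}^u$ into three explicit bracket terms, applies $\{AB,A\}=A\{B,A\}$ to each, and recombines them by hand into a multiple of $\pba{n^u,1-(n^u)^2}=0$. You instead isolate the tensorial lemma $\sum_j\{x^i,x^j\}\{x^j,x^k\}=v^iv^k-\gamma^2\delta^{ik}$ --- which checks out: the $\eps\eps$-determinant expansion gives two terms contributing $\tfrac12\gamma^2\delta^{ik}$ each (via $\sum_{l,m}\{x^l,x^m\}^2=2\gamma^2$) and four terms contributing $\tfrac14 h^{ik}$ each --- and after dividing by $\gamma^2$ this is exactly the algebraic shadow of the geometric fact that $\gamma^{-2}\P^2=\mid-NN^T$ is the tangential projection. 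Once that lemma is in place the whole identity collapses, uniformly in $i$, to $\sum_k\pba{n^in^k,n^k}=\tfrac12\pba{n^i,\sum_k(n^k)^2}=\tfrac12\{n^i,1\}=0$. What your route buys is that the only index-heavy step is quarantined in a lemma with transparent geometric meaning, all three components are treated at once, and the argument visibly points toward the hypersurface generalization $\eps_{klL}\pba{\gamma^{-2}\{x^i,\xv^J\}\{\xv^J,x^k\},\gamma^{-1}\{x^l,\xv^L\}}_f=0$ recorded at the end of the section on surfaces; what the paper's route buys is a shorter, entirely self-contained component check with no $\eps\eps$ bookkeeping.
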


\begin{proof}
  Let $u,v,w$ be a cyclic permutation of $1,2,3$. In the following we
  do not sum over repeated indices $u,v,w$. Denoting by $\text{CM}^i$
  the $i$'th component of the Codazzi-Mainardi equation, one has
  \begin{align*}
    &\text{CM}^u = -\pba{\gamma^{-2}\paraa{\{x^u,x^v\}^2+\{x^w,x^u\}^2},\gamma^{-1}\{x^v,x^w\}}\\
    &\quad+\pba{\gamma^{-2}\{x^u,x^v\}\{x^v,x^w\},\gamma^{-1}\{x^u,x^v\}}
    +\pba{\gamma^{-2}\{x^u,x^w\}\{x^w,x^v\},\gamma^{-1}\{x^w,x^u\}}\\
    &\quad= -\pba{1-\gamma^{-2}\{x^v,x^w\}^2,\gamma^{-1}\{x^v,x^w\}}
    +\gamma^{-1}\{x^u,x^v\}\pba{\gamma^{-1}\{x^v,x^w\},\gamma^{-1}\{x^u,x^v\}}\\
    &\quad+\gamma^{-1}\{x^u,x^w\}\pba{\gamma^{-1}\{x^w,x^v\},\gamma^{-1}\{x^w,x^u\}}\\
    &\quad= \frac{1}{2}\pba{\gamma^{-1}\{x^v,x^w\},\gamma^{-2}\paraa{\gamma^2-\{x^v,x^w\}^2}}=0.\qedhere
  \end{align*}
\end{proof}

\noindent Let us end by noting that these results generalize to
arbitrary hypersurfaces in $\reals^{n+1}$. Namely, 
\begin{align*}
  &\{\gamma^{-2}\pba{x^i,\xv^J\}\{\xv^J,n^k\},x^k}_f
  =\{\gamma^{-2}\pba{x^i,\xv^J\}\{\xv^J,x^k\},n^k}_f,\\
  &(\d_cx^i)\pba{\gamma^{-2}\paraa{\P^2}^{ik},n^k}_f=
  -\frac{1}{\rho}\eps^{aba_1\cdots a_{n-2}}\paraa{\nabla_ah_{bc}}\paraa{\d_{a_1}f_1}\cdots\paraa{\d_{a_{n-2}}f_{n-2}},
\end{align*}
and
\begin{align*}
  \eps_{klL}\pba{\gamma^{-2}\{x^i,\xv^J\}\{\xv^J,x^k\},\gamma^{-1}\{x^l,\xv^L\}}_f=0
\end{align*}
for arbitrary $x^1,\ldots,x^{n+1}\in C^\infty(\Sigma)$.

\bibliographystyle{alpha}
\bibliography{nsurface}

\end{document}